\newtheorem{theorem}{Theorem}[section]
\newtheorem{proposition}[theorem]{Proposition}
\newtheorem{assumption}{Assumption}
\newtheorem{example}[theorem]{Example}
\crefname{assumption}{assumption}{assumptions}
\Crefname{assumption}{Assumption}{Assumptions}
\theoremstyle{definition}
\newtheorem{definition}[theorem]{Definition}
\newtheorem{remark}[theorem]{Remark}
\newtheorem{construction}[theorem]{Construction}
\newcommand{\rrightarrow}{\to}
\newcommand{\namedCat}[1]{\textup{\textbf{#1}}}
\newcommand{\N}{\mathbb{N}}
\newcommand{\id}{\mathsf{id}}
\newcommand{\comp}{\mathop{\fatsemi}}
\newcommand{\iso}{\cong}
\newcommand{\Cat}{\namedCat{C}}
\renewcommand{\hom}{\mathrm{arrow}}
\newcommand{\ob}{\mathrm{ob}}
\newcommand{\RelPlus}{\namedCat{Rel}^+}
\newcommand{\SetMulti}{\namedCat{SetMulti}}
\newcommand{\FinStoch}{\namedCat{FinStoch}}
\newcommand{\into}{\to}
\newcommand{\inclusion}{\gamma}
\newcommand{\poss}{P}
\newcommand{\possplus}{\poss^+}
\newcommand{\prob}{D}
\newcommand{\update}{\mathsf{upd}}
\newcommand{\sys}[1]{\mathsf{#1}}
\newcommand{\Sys}{\mathbf{Sys}}
\newcommand{\states}[1]{#1}
\newcommand{\inputsx}{i}
\newcommand{\inputsX}{I}
\newcommand{\inputsY}{\inputsX'}
\newcommand{\inputsM}{J}
\newcommand{\obs}{{\mathsf{\inputsx}}}
\newcommand{\sts}{{\mathsf{s}}}
\newcommand{\sysx}{x}
\newcommand{\sysX}{X}
\newcommand{\SysX}{{\sys \sysX}}
\newcommand{\sysY}{X'}
\newcommand{\SysY}{{\sys \sysY}}
\newcommand{\sysm}{m}
\newcommand{\sysM}{M}
\newcommand{\SysM}{{\sys \sysM}}
\newcommand{\env}{E}
\newcommand{\Env}{{\sys \env}}
\newcommand{\plant}{P}
\newcommand{\Plant}{{\sys \plant}}
\newcommand{\cont}{C}
\newcommand{\Cont}{{\sys \cont}}
\newcommand{\contstar}{{\cont^*}}
\newcommand{\Contstar}{{\Cont^*}}
\newcommand{\contstaraut}{\contstar}
\newcommand{\Contstaraut}{{\Cont^*_{\mathsf{aut}}}}
\newcommand{\system}{S}
\newcommand{\System}{{\sys \system}}
\newcommand{\goal}{K}
\newcommand{\proj}{\pi}
\newcommand{\del}{\mathsf{del}}
\newcommand{\Copy}{\Delta}
\def\markto{\mathrel{\mkern2.5mu  \vcenter{\hbox{$\scriptstyle\bullet$}} \mkern-11.5mu{\to}}}
\newcommand{\belief}{\psi}
\newcommand{\model}{\mu}
\newcommand{\indmodel}{\model^{-1}}
\newcommand{\Unit}{I}
\newcommand{\parameter}{\theta}
\newcommand{\Parameter}{\Theta}
\newcommand{\hiddenstate}{x}
\newcommand{\Hiddenstate}{X}
\newcommand{\observation}{y}
\newcommand{\Observation}{Y}
\newcommand{\prior}{p}
\newcommand{\likelihood}{f}
\newcommand{\posterior}{\likelihood^\dagger}
\newcommand{\conj}{c}
\newcommand{\modelEnv}{\nu}
\newcommand{\modelenv}{\nu_\sts}
\newcommand{\modelSys}{\proj_{\Contstaraut}}
\newcommand{\modelsys}{\proj_{\contstaraut}}
\newcommand{\indmodelenv}{\modelenv^{-1}}
\newcommand{\indmodelsys}{\modelsys^{-1}}
\newcommand{\diamondupdate}[1]{\overline{\update_{\sys{#1}}}}
\newcommand{\reasoner}{\conj}
\newcommand{\hiddenmodel}{\kappa}
\newcommand{\posteriorFiltering}{\hiddenmodel^\dagger}
\tikzstyle{function clear}=[fill=white, draw=dgreen, shape=circle, minimum size=6 pt, inner sep=2 pt]
\tikzstyle{function black}=[fill=black, draw=black, shape=circle, inner sep=1, minimum size=2mm]
\tikzstyle{function red}=[fill=red, draw=red, shape=circle, inner sep=1, minimum size=2mm]
\tikzstyle{function blue}=[fill=blue, draw=blue, shape=circle, inner sep=1, minimum size=2mm]
\tikzstyle{function magenta}=[fill=magenta, draw=magenta, shape=circle, inner sep=1, minimum size=2mm]
\tikzstyle{function teal}=[fill=teal, draw=teal, shape=circle, inner sep=1, minimum size=2mm]
\tikzstyle{function brown}=[fill=brown, draw=brown, shape=circle, inner sep=1, minimum size=2mm]
\tikzstyle{function orange}=[fill=orange, draw=orange, shape=circle, inner sep=1, minimum size=2mm]
\tikzstyle{new style 0}=[fill={rgb,255: red,76; green,169; blue,170}, draw={rgb,255: red,76; green,169; blue,170}, shape=circle, inner sep=1]
\tikzstyle{box-.5-.5}=[fill=none, draw=black, shape=rectangle, minimum width=5mm, minimum height=5mm, thick]
\tikzstyle{box-.5-1}=[fill=none, draw=black, shape=rectangle, minimum width=5mm, minimum height=10mm, thick]
\tikzstyle{box-1-.5}=[fill=none, draw=black, shape=rectangle, minimum width=1cm, minimum height=5mm, thick]
\tikzstyle{fbox-1-.5-blue}=[fill={blue!10}, draw=black, shape=rectangle, minimum width=1cm, minimum height=5mm, thick]
\tikzstyle{box-1-.5-blue}=[fill=none, draw=blue, shape=rectangle, minimum width=1cm, minimum height=5mm, thick]
\tikzstyle{fbox-1-.5-red}=[fill={red!10}, draw=black, shape=rectangle, minimum width=1cm, minimum height=5mm, thick]
\tikzstyle{box-1-.5-red}=[fill=none, draw=red, shape=rectangle, minimum width=1cm, minimum height=5mm, thick]
\tikzstyle{box-1-1}=[fill=none, draw=black, shape=rectangle, minimum width=1cm, minimum height=1cm, thick]
\tikzstyle{box-1-1-red}=[fill=none, draw=red, shape=rectangle, minimum width=1cm, minimum height=1cm, thick]
\tikzstyle{box-1-2}=[fill=none, draw=black, shape=rectangle, minimum width=1cm, minimum height=2cm, thick]
\tikzstyle{box-1.5-.5}=[fill=none, draw=black, shape=rectangle, minimum width=1.5cm, minimum height=5mm, thick]
\tikzstyle{box-1.5-1}=[fill=none, draw=black, shape=rectangle, minimum width=1.5cm, minimum height=1cm, thick]
\tikzstyle{box-2-.5}=[fill=none, draw=black, shape=rectangle, minimum width=2cm, minimum height=.5cm, thick]
\tikzstyle{box-2-1}=[fill=none, draw=black, shape=rectangle, minimum width=2cm, minimum height=1cm, thick]
\tikzstyle{box-3-.5}=[fill=none, draw=black, shape=rectangle, minimum width=3cm, minimum height=.5cm, thick]
\tikzstyle{box-3-2}=[fill=none, draw=black, shape=rectangle, minimum width=3cm, minimum height=2cm, thick]
\tikzstyle{box-3-4}=[fill=none, draw=black, shape=rectangle, minimum width=3cm, minimum height=4cm, rounded corners, thick]
\tikzstyle{box-4-.5}=[fill=none, draw=black, shape=rectangle, minimum width=4cm, minimum height=.5cm, thick]
\tikzstyle{box-4-3}=[fill=none, draw=black, shape=rectangle, minimum width=4cm, minimum height=3cm, thick]
\tikzstyle{dashed-box-4-3}=[fill=none, draw=black, shape=rectangle, minimum width=4cm, minimum height=3cm, thick, rounded corners, dashed]
\tikzstyle{box-4-5}=[fill=none, draw=black, shape=rectangle, minimum width=4cm, minimum height=5cm, thick]
\tikzstyle{box-5-2}=[fill=none, draw=black, shape=rectangle, minimum width=5cm, minimum height=2cm, thick]
\tikzstyle{box-5-3}=[fill=none, draw=black, shape=rectangle, minimum width=5cm, minimum height=3cm, thick]
\tikzstyle{box-5-4}=[fill=none, draw=black, shape=rectangle, minimum width=5cm, minimum height=4cm, thick]
\tikzstyle{box-6-3}=[fill=none, draw=black, shape=rectangle, minimum width=6cm, minimum height=3cm, thick]
\tikzstyle{box-6-4}=[fill=none, draw=black, shape=rectangle, minimum width=6cm, minimum height=4cm, thick]
\tikzstyle{box-6-5}=[fill=none, draw=black, shape=rectangle, minimum width=6cm, minimum height=5cm, thick]
\tikzstyle{box-7-5}=[fill=none, draw=black, shape=rectangle, minimum width=7cm, minimum height=5cm, thick]
\tikzstyle{box-8-4}=[fill=none, draw=black, shape=rectangle, minimum width=8cm, minimum height=4cm, thick]
\tikzstyle{box-10-6}=[fill=none, draw=black, shape=rectangle, minimum width=10cm, minimum height=6cm, thick]
\tikzstyle{box-13-6}=[fill=none, draw=black, shape=rectangle, minimum width=13cm, minimum height=6cm, thick]
\tikzstyle{ellipse-20-12}=[fill=none, draw=red, shape=ellipse, minimum width=20cm, minimum height=12cm, thick]
\tikzstyle{ellipse-14-8}=[fill=none, draw=yellow, shape=ellipse, minimum width=14cm, minimum height=8cm, thick]
\tikzstyle{ellipse-8-4.5}=[fill=none, draw=green, shape=ellipse, minimum width=8cm, minimum height=4.5cm, thick]
\tikzstyle{object red}=[-, draw={rgb,255: red,191; green,0; blue,64}, thick]
\tikzstyle{object blue}=[-, draw=blue, thick]
\tikzstyle{object orange}=[-, draw={rgb,255: red,255; green,128; blue,0}, thick]
\tikzstyle{big dashes}=[-, dash pattern=on 4mm off 2mm, thick, dashed]
\tikzstyle{arrow}=[->]
\tikzstyle{thick-line}=[-, thick]
\tikzstyle{thick-blue-line}=[-, thick, draw=blue]
\tikzstyle{thick-red-line}=[-, thick, draw=red]
\tikzstyle{thick-double-line}=[-, thick, double]
\tikzstyle{thick-arrow}=[->, thick]
\tikzstyle{thick-blue-arrow}=[->, thick, draw=blue]
\tikzstyle{thick-red-arrow}=[->, thick, draw=red]
\tikzstyle{tube-blue}=[-, fill=none, draw={rgb,255: red,103; green,161; blue,255}, line width=6]
\tikzstyle{tube-green}=[-, draw={rgb,255: red,110; green,207; blue,108}, line width=6, fill=none]
\renewcommand{\textit}{\emph}
\begin{document}

\title{A Bayesian Interpretation\\of the Internal Model Principle}

\author{\textbf{Manuel Baltieri}, Araya Inc.$^\dagger$\thanks{$^\dagger$ Correspondence: manuel\_baltieri@araya.org} \\
    \textbf{Martin Biehl}, Cross Labs, Cross Compass Ltd.\\
    \textbf{Matteo Capucci}, University of Strathclyde and Independent Researcher\\
    \textbf{Nathaniel Virgo}, University of Hertfordshire and \\Earth-Life Science Institute, Institute of Science Tokyo}



\maketitle

\begin{abstract}
    The internal model principle, originally proposed in the theory of control of linear systems, nowadays represents a more general class of results in control theory and cybernetics.
    The central claim of these results is that, under suitable assumptions, if a system (a controller) can regulate against a class of external inputs (from the environment), it is because the system contains a model of the system causing these inputs, which can be used to generate signals counteracting them.
    Similar claims on the role of internal models appear also in cognitive science, especially in modern Bayesian treatments of cognitive agents, often suggesting that a system (a human subject, or some other agent) models its environment to adapt against disturbances and perform goal-directed behaviour.
    It is however unclear whether the Bayesian internal models discussed in cognitive science bear any formal relation to the internal models invoked in standard treatments of control theory.
    Here, we first review the internal model principle and present a precise formulation of it using concepts inspired by categorical systems theory.
    This leads to a formal definition of ``model'' generalising its use in the internal model principle.
    Although this notion of model is not \emph{a priori} related to the notion of Bayesian reasoning, we show that it can be seen as a special case of \emph{possibilistic} Bayesian filtering.
    This result is based on a recent line of work formalising, using Markov categories, a notion of \emph{interpretation}, describing when a system can be interpreted as performing Bayesian filtering on an outside world in a consistent way.

\end{abstract}

\begin{IEEEkeywords}
    Cybernetics, Control Theory, Internal Model Principle, Interpretation Map, Bayesian Inference, Bayesian Filtering.
\end{IEEEkeywords}

\section{Introduction}
\label{sec:intro}
A classic slogan in cybernetics states that ``every good regulator of a system must be a model of that system''~\cite{conant1970every}.
This idea, a relative of the ``law of requisite variety''~\cite{ashby1958requisite} is also influential in fields influenced by cybernetics, such as control theory, biology, artificial intelligence and cognitive science.

In control theory specifically, the ``internal model principle'' (IMP)~\cite{francis1976internal} refers to a general principle (a ``mold'', or a guide for a class of results as argued by~\cite{sontag2003adaptation, bin2022internal}) that in some sense formalises claims in~\cite{ashby1958requisite, conant1970every} by defining sufficient conditions for the existence of internal models of the environment in controllers for certain classes of regulation problems.

Similar concepts have been invoked in other fields. Internal models appear in biology based on the IMP and form the basis of work on homeostasis and (perfect) adaptation in living organisms at all scales, including microorganisms such as bacteria~\cite{yi2000robust, briat2016antithetic, khammash2021perfect}.

In artificial intelligence, the concept of \emph{world model}~\cite{ha2018world, wu2023daydreamer, taniguchi2023world}, closely related to the idea of an internal model, underlies a research programme with applications to reinforcement learning, robotics and deep learning, focusing on learning how to represent hidden properties of the environment~\cite{russell2009artificial}.

In cognitive science and neuroscience, internal models are broadly thought to constitute the computational basis of perception, motor control and high-level cognitive reasoning~\cite{grossberg1987competitive, kawato1999internal, ito2008control, baker2022three}, although there is no shortage of debate about this, e.g.~\cite{harvey1992untimed, Beer1995, van1995might, harvey2008misrepresentations}. In the context of neuroscience, internal models are often, though by no means universally, presented under a Bayesian framework.
According to the Bayesian view, brains or agents as whole systems, can be thought of as Bayesian reasoners and their cognitive processes as instances of Bayesian inference~\cite{knill1996perception, mcnamee2019internal, todorov2006optimal, seth2014cybernetic}.

While the label ``internal model,'' or just ``model'' is used across different disciplines, it is unclear whether it always refers to the same underlying formal concept.
If cognitive scientists propose internal models for the study of cognition, are they referring to the same kind of mathematical objects as control theorists working with internal models for regulation problems?
We do not fully answer these questions here, but take some steps towards answering them.

To do so, we structure this work in two main parts.
In the first part (\cref{sec:IMP}), we present the IMP developed by~\cite{wonham1976towards, hepburn1981internal, hepburn1984structurally, hepburn1984error, huang2018internal} using concepts inspired by categorical systems theory, a mathematical formalisation of systems and their interactions based on category theory~\cite{myers2021double, myers2023categorical, capucci2022towards}.
We take particular care in spelling out the assumptions that underpin work on the internal model principle, and along the way, we provide a definition of a ``model''.
This definition is inspired by and compatible with the one found in the IMP literature for closed and autonomous systems, but also applies to systems with inputs.
We then focus, in the remainder of the paper, on the case of autonomous systems, as treated by the standard IMP, and highlight one of the IMP's central aspects.
Its assumptions ensure that although the controller isn't autonomous, its dynamics are effectively described by an autonomous system that we call the ``autonomous attracting controller''.
This autonomous system is the one with a model of the environment.

In the second part, we first introduce the notion of a Bayesian filtering interpretation (\cref{sec:interpretationMap}).
This formally captures what it means for a system to support an interpretation as a Bayesian reasoner, and can also be considered as a formalisation of ``model'', albeit in a rather different sense than in the IMP.
Bayesian filtering interpretations were formulated in previous work~\cite{virgo2021interpreting} in the setting of Markov categories, a recent approach to synthetic probability theory~\cite{cho2019disintegration,fritz2020synthetic} situated in the tradition of {applied} category theory, and related to ideas on process theories~\cite{coecke2016mathematical, coecke2017picturing} and the graphical language of string diagrams.
Here we work in a particular Markov category, $\RelPlus$ (of sets and total relations), that captures \emph{possibilistic} rather than probabilistic non-determinism.

In \cref{sec:IMPInterpretation} we then show that, for autonomous systems, whenever there is an internal model in the IMP sense, there is also a corresponding Bayesian filtering interpretation.
To the best of our knowledge, this is the first result that establishes a formal connection between the concept of (internal) ``model'' used in the IMP, and the Bayesian inference/filtering literature.
This Bayesian filtering interpretation is however of a particular kind.
On the one hand it corresponds to a simplistic case of Bayesian filtering in which the system doing filtering never makes use of its inputs in any non-trivial way, meaning that although the prior changes over time in order to track the changing hidden state, the posterior for a given prior does not depend on the observation or input.
Because of this, we consider Bayesian filtering interpretations to be a more general notion of model than the one used in the IMP.
On the other hand, the Bayesian filtering interpretation employs a kind of approximation of the modelled system and not the modelled system directly.

\section{The Internal Model Principle}
\label{sec:IMP}
The IMP appears in the control theory literature as a series of different results that include, for instance, proposals using linear~\cite{francis1976internal} and nonlinear~\cite{isidori1990output} systems, and a focus on geometric~\cite{wonham1974linear, francis1976internal} or algebraic approaches~\cite{wonham1976towards}.
The latter are of particular interest in this work. The algebraic approach proposed by Wonham~\cite{wonham1976towards}, later refined in~\cite{hepburn1981internal, hepburn1984structurally, hepburn1984error}, paves the road for a more abstract version of the IMP, capturing its core assumptions for a broad class of system, without requiring explicit assumptions regarding the geometry (or \emph{any} geometry) of a system, see for instance~\cite{huang2018internal, wonham2019supervisory}.

In this section, we provide a self-contained account and critique of~\cite{hepburn1981internal, hepburn1984structurally, hepburn1984error}'s IMP.
In the remainder of this work, unless otherwise stated, ``IMP'' will refer to the one described in this section.
We start by first giving a definition of system that will be used throughout this work.

\subsection{Systems and their maps}
For the sake of simplicity, we will be working with discrete (in both time and space) dynamical systems, focusing on sets and functions.
Since we adopt a structural approach to our description, we expect that much of what follows can be stated at a higher level of abstraction, which would make it applicable to dynamical systems of other kinds (e.g.~smooth dynamical systems, as in~\cite{hepburn1981internal, hepburn1984structurally, hepburn1984error}, or topological/stochastic dynamical systems).\footnote{Indeed, all open dynamical systems as above fit in a common structural description, as shown in~\cite{myers2021double, myers2023categorical}.} In the present work we will use the following concrete definition:

\begin{definition}[System]
    \label{def:system}
    A \textit{system} (or more precisely, a \textit{fully observable system}) $\SysX$ is comprised of a set $\sysX$ of \textit{states}, a set $\inputsX$ of \textit{inputs} (or \textit{observations}), and an \textit{update} (or \textit{dynamics}) function:
    \begin{align}
        \begin{split}
            \update_{\SysX} &: \sysX \times \inputsX \to \sysX,
        \end{split}
    \end{align}
    The pair $\binom{\inputsX}{\sysX}$ is collectively referred to as the \textit{interface} of the system, and we write $\SysX : \Sys{\binom{\inputsX}{\sysX}}$ to mean $\SysX$ has such an interface.
\end{definition}

It might seem odd that the interface of a system includes its state space (as well as its inputs), however this is because we assume that the state space is exposed for other systems to observe.
Unless otherwise stated, we thus note that all of our uses of ``system'' in this work will specifically refer to the definition given above, meaning we will be dealing with fully observable systems.
Some of these systems will also have trivial inputs, meaning~$\inputsX$ is a singleton $1=\{\ast\}$.
We will call such systems \textit{autonomous}.

\begin{remark}
    We will denote a system $\SysX$ in a sans-serif font, and its state space $\sysX$ with the same letter but in regular font.
    A point we want to make in this section is that it is crucial to distinguish the two.
\end{remark}

Next we consider maps between systems.
This definition takes into account the fact that a system is comprised not only of a state space, but also of an interface and dynamics on its states.
So a map of systems is given by maps between their respective inputs and states which preserves their dynamics. Here and in the following we use the diagrammatic order for composition of functions i.e. if $f: X \to Y$ and $g:Y \to Z$ we write $(f\comp g):X \to Z$ for the composite function where $(f\comp g)(x) \coloneqq g \big( f(x) \big)$.

\begin{definition}[Map of systems]
    \label{def:mapsOfSystems}
    Let $\SysX : \Sys{\binom{\inputsX}{\sysX}}$ and $\SysY : \Sys{\binom{\inputsY}{\sysY}}$ be systems.
    A \textit{map of systems} $f:\SysX \to \SysY$ is comprised of two parts:
    \begin{enumerate}
        \item a \textit{map on states}, given by a function
        \begin{equation}
            f_\sts : \sysX \to \sysY,
        \end{equation}
        \item a \textit{map on inputs}, given by a function
        \begin{align}
            f_\obs &: \sysX \times \inputsX \to \inputsY,
        \end{align}
    \end{enumerate}
    such that the following diagram commutes:
    \begin{equation}
        \begin{matrix}
            \begin{tikzcd}[ampersand replacement=\&]
                {\sysX \times \inputsX} \&[10ex] {\sysY \times \inputsY} \\
                {\sysX} \& {\sysY}
                 \arrow["{(\proj_\sysX \comp f_\sts, f_\obs)}",from=1-1, to=1-2]
                \arrow["f_\sts", from=2-1, to=2-2]
                \arrow["{\update_{\SysX}}"', from=1-1, to=2-1]
                \arrow["{\update_{\SysY}}", from=1-2, to=2-2]
            \end{tikzcd}
        \end{matrix}
    \end{equation}
    meaning that, for every $\sysx \in \sysX, \inputsx \in \inputsX$, the following equation is satisfied:
    \begin{align}
        f_\sts \big( \update_{\sysX}(\sysx, \inputsx) \big) &= \update_{\sysY} \big( f_\sts(\sysx), f_\obs(\sysx, \inputsx) \big).
    \end{align}
\end{definition}

It's worth commenting on the map on inputs, since one might expect it to have type $\inputsX \to \inputsY$ rather than the more general $\sysX \times \inputsX \to \inputsY$.
This latter choice comes from the notion of \textit{chart} in categorical systems theory~\cite{myers2023categorical}, and allows a more general class of maps between systems.
We make use of this in \cref{rmk:projections}, since the maps described there would not exist if we used the more restrictive definition.
When $\SysX$ and $\SysY$ have the same set of inputs, a map between them can be given by just specifying the map on states.

\begin{construction}
\label{const:comp-of-sys-maps}
    In what follows, we will need to compose maps of systems, so we explain here how that happens.
    Given maps ${f:\sys X \to \sys X'}$ and ${g:\sys X' \to \sys X''}$, where ${\sys X : \Sys{\binom{I}{X}}}$, ${\sys X' : \Sys{\binom{I'}{X'}}}$ and ${\sys X'' : \Sys{\binom{I''}{X''}}}$, their composition
    ${f \comp g: \sys X \to \sys X''}$
    is given on states by the composition of the maps on states:
    \begin{equation}
        (f \comp g)_\sts =  f_\sts \comp g_\sts,
    \end{equation}
    while on inputs
    $(f \comp g)_\obs : X \times I \to I''$
    is defined as follows:
    \begin{equation}
        (f \comp g)_\obs(x, i) = g_\obs \big( f_\sts(x),f_\obs(x,i) \big).
    \end{equation}
\end{construction}

The notions of subsystem of a system and attracting subsystem will be crucial to define the regulation condition for the IMP, hence we introduce them here:

\begin{definition}[Subsystem]
    \label{def:subsystem}
    A \textit{subsystem} of $\SysX$ is a forward-invariant subset of $\sysX$ together with updates restricted to the subset, thus a map of systems $\sys \inclusion : \SysY \into \SysX$ given on states by an inclusion $\inclusion_\sts : \sysY \into \sysX$ and on inputs by projection $\inclusion_\obs = \proj_\inputsX : \sysY \times \inputsX \rrightarrow \inputsX$.
\end{definition}

Forward-invariant here means that if the state is in $\sysY$ then the next state will also be in $\sysY$, no matter what observation is received.
The dynamics of a subsystem are induced by the dynamics of its parent system. In fact one can say that a subsystem of $\SysX$ is given by a subset $\sysY \subseteq \sysX$ such that $\update_{\SysX}(\sysx', \inputsx) \in \sysY$ for all $\sysx' \in \sysY$ and $\inputsx \in \inputsX$. The update function of the subsystem is then just $\update_{\SysX}$ restricted to $\sysX' \times \inputsX$.

Note that the term ``subsystem'' is often used to mean a component, or a part, of a system.
This is not the sense in which we use the term here.
For example, using terminology from \cref{ass:factorisation} below, the controller is not a subsystem of the full system.

\begin{definition}[Attracting subsystem]
    \label{def:attractingSubsystem}
    An \textit{attracting subsystem} for $\SysX$ is a subsystem $\SysX^* \into \SysX$ such that, for each $\sysx \in \sysX$, there exists $n \in \N$ such that for all $t \geq n$ and $\underline \inputsx \in \inputsX^t$, we have $\update_{\SysX}^t(\sysx, \underline \inputsx) \in \sysX^*$ .
\end{definition}

Notice that we do not assume $\SysX^*$ to be the smallest subsystem of $\SysX$ meeting this criterion.
In part because of this, our definition is rather more general than the usual definition of attractor.
For example, $\SysX^*$ could be the basin of attraction of a fixed point, or it could contain multiple distinct orbits.
On the other hand, we stress that an attracting subsystem of a non-empty system must be non-empty.

\subsection{Systems that model other systems}
We now give the following definition which we believe to be novel, though in spirit very much like that proposed in~\cite{hepburn1984error} as well as in~\cite{conant1970every}.
This definition is a special map of systems, see \cref{def:mapsOfSystems}, one that describes when such a map can be seen as a model.

\begin{definition}[Model]
    \label{def:model}
    Given systems $\SysX : \Sys{\binom{\inputsX}{\sysX}}$ called the \emph{referent} and $\SysM : \Sys{\binom{\inputsM}{\sysM}}$ called the \emph{referrer}, a \emph{model} is a map $\model : \SysX \to \SysM$ such that
    \begin{enumerate}
        \item its part on states $\model_\sts : \sysX \rrightarrow \sysM$ is surjective, and
        \item its part on inputs $\model_\obs(\sysx,-): \inputsX \rrightarrow \inputsM$ is surjective for each $\sysx \in \sysX$.
    \end{enumerate}
\end{definition}

Often we will just say ``$\SysM$ models $\SysX$'', leaving $\model$ implicit.
The idea of such a definition is that the map $\model$ forgets some of the complexity of the system $\SysX$, which is mapped (surjectively) into the simpler system $\SysM$.
It is also connected to ``coarse-grainings''~\cite{noid2008multiscale}, ``variable aggregation''~\cite{simon1961aggregation}, ``state aggregation''~\cite{ren2002state}, ``lumpability''~\cite{kemeny1969finite}, ``model reduction''~\cite{moore1981principal}, ``dynamical consistency''~\cite{caines1995hierarchical} or other similar concepts, and reflected in standard ideas of ``supervisory control''~\cite{wonham2019supervisory}, for readers already familiar with any of these.

Importantly, however, our definition is not exactly the same as any of the standard definitions mentioned above since the map on inputs corresponds to a chart~\cite{myers2023categorical} for fully observable systems, as alluded to in the definition of maps of systems (\cref{def:mapsOfSystems}): it is of the form $\sysX \times \inputsX \to \inputsM$ and not simply $\inputsX \to \inputsM$.
This is important because it allows us to define a map of system between an autonomous system, e.g. $\System$, and one of its components as a non-autonomous one, e.g. $\Cont, \Plant$ or $\Env$, see \cref{rmk:projections}, something that wouldn't be possible with a map on inputs with the simpler type signature $\inputsX \to \inputsM$.

Having a model ${\mu:\SysX \to \SysM}$ means that for each state $\sysm \in \sysM$ we have a set of states $\indmodel(\sysm) \in \sysX$, called the \emph{fibre of $\sysm$}, which represents a subset of elements of $\sysX$ that are \emph{indistinguishable} from the perspective of the simpler system $\SysM$ as they all map to the same element $\sysm \in \sysM$ via the surjective function $\model_\sts$.
As $\sysm \in \sysM$ varies along the function $\update_{\SysM}$, this variation is consistent with the variation described by the function $\update_{\SysX}$ for each element $\sysx$ of the fibre $\indmodel(\sysm)$ of $\sysm$.
This will be further unpacked in \cref{const:model}.

\begin{remark}
\label{rmk:HWIMP}
    When applied to autonomous systems, a model reduces to the definition implicit in~\cite{hepburn1984error, hepburn1984structurally}, namely a surjective map of states commuting with the dynamics:
    \begin{equation}
    \label{eq:autonomous_model}
        \begin{tikzcd}[ampersand replacement=\&]
            \sysX \& \sysM \\
            \sysX \& \sysM
            \arrow["\update_\SysX"', from=1-1, to=2-1]
            \arrow["\model_\sts"', from=2-1, to=2-2] 
            \arrow["\model_\sts", from=1-1, to=1-2] 
            \arrow["\update_\SysM", from=1-2, to=2-2]
        \end{tikzcd}
    \end{equation}
    This is because the map on inputs of a model is necessarily of the form $\sysX \times 1 \rrightarrow 1$ for autonomous systems, due to the surjectivity condition. Therefore systems that model autonomous systems must also be autonomous, explaining the need for \Cref{ass:autonomousController} later on.

    In \cite{hepburn1984error, hepburn1984structurally} this definition is mixed with other assumptions, particularly \Cref{ass:allEnvironmentStatesExactlyOnce}, which we believe are not strictly necessary (and potentially problematic) for a notion of model.
    Our definition is also in terms of sets as in~\cite{huang2018internal, wonham2019supervisory} (rather than in terms of manifolds as in~\cite{hepburn1984error, hepburn1984structurally}).
\end{remark}

\begin{remark}
    Like any good definition, \cref{def:model} admits a trivial instance.
    A \emph{trivial model} is one where ${\model:\sysX \to \sysM}$ is a product projection, which means there exists a system $\sys F \in \Sys{\binom{H}{F}}$ such that the state space and inputs of the system $\SysX$ factor as $X = M \times F$ and $I=J \times H$ respectively, \emph{and} its dynamics decomposes in that of $\SysM$ and $\sys F$:
    \begin{equation}
    \label{eq:indep-dyn}
        \update_\SysX \big( (m,f),(j,h) \big) = \big( \update_\SysM(m,j), \, \update_{\sys F}(f,h) \big).
    \end{equation}
    Thus, in a trivial model, knowledge of $\SysM$ does not afford knowledge about the rest of $\SysX$, because $\SysM$ and $\sys F$ are not coupled.
    We stress that what makes a trivial model ``trivial'' is \cref{eq:indep-dyn}, rather than $\model_\sts$ and (fibrewise) $\model_\obs$ being product projections.
\end{remark}

\subsection{The IMP assumptions}
Having fixed what we mean by ``systems'', maps between them, and when such maps can be seen as models, let us define the systems of interest for the IMP.
The following is a typical control theoretic setup, and is used in particular in the work by Hepburn and Wonham~\cite{hepburn1981internal, hepburn1984error, hepburn1984structurally}, where it is presented a bit differently.
Throughout this section we introduce the assumptions used by Hepburn and Wonham to derive the internal model principle.
We note in advance that one of them, \Cref{ass:allEnvironmentStatesExactlyOnce}, seems to us rather difficult to motivate.
We begin with the following:

\begin{assumption}[Environment, plant, controller]
    \label{ass:factorisation}
    The following three \textit{components} are so defined:
    \begin{enumerate}
        \item the \textit{environment} $\Env : \Sys{\binom{1}{\env}}$ is an autonomous system
        \begin{align}
            \begin{split}
                \update_\Env & : \env \to \env,
                \label{eqn:updateEnv}
            \end{split}
        \end{align}
        \item the \textit{plant} $\Plant : \Sys{ \binom{\env \times \cont}{\plant}}$ is a system
        \begin{align}
            \begin{split}
                \update_\Plant & : \plant \times \env \times \cont \to \plant,
            \end{split}
        \end{align}
        \item the \textit{controller} $\Cont : \Sys{\binom{\plant}{\cont}}$ is a system
        \begin{align}
            \begin{split}
                \update_\Cont & : \cont \times \plant \to \cont.
                \label{eqn:updateCont}
            \end{split}
        \end{align}
    \end{enumerate}
    The \textit{full system} $\System : \Sys{\binom{1}{\env \times \plant \times \cont}}$ is the following composite autonomous system:
    \begin{align}
        \begin{split}
            \update_\System : \env \times \plant \times \cont \longrightarrow & \env \times \plant \times \cont \\
            (s_\Env, s_\Plant, s_\Cont) \longmapsto & \big( \update_\Env(s_\Env), \update_{\Plant}(s_\Plant, s_\Env, s_\Cont), \\
            & \update_{\Cont}(s_\Cont, s_\Plant) \big).
        \end{split}
    \end{align}
    Let ${\states{\system} = \env \times \plant \times \cont}$ denote the state space of $\System$.
\end{assumption}

We next look at the maps of systems that can be defined between these components thanks to the definition of maps of inputs in \cref{def:mapsOfSystems}.

\begin{remark}
    \label{rmk:projections}
    There are maps of systems ${\proj_\Env: \System \rrightarrow \Env}$, ${\proj_\Plant: \System \rrightarrow \Plant}$ and ${\proj_\Cont: \System \rrightarrow \Cont}$ induced by projecting out states of $\System$:
    \begin{equation}
        \begin{gathered}
            \begin{tikzcd}[ampersand replacement=\&]
                {\system \times 1} \&[5ex] {\env \times 1} \\
                \system \& \env
                \arrow["{(\proj_\env, \id_1)}", from=1-1, to=1-2] 
                \arrow["{\update_\System}"', from=1-1, to=2-1]
                \arrow["{\update_{\Env}}", from=1-2, to=2-2]
                \arrow["{\proj_\env}"', from=2-1, to=2-2] 
            \end{tikzcd}
            \hspace*{2ex}
            \begin{tikzcd}[ampersand replacement=\&]
                {\system \times 1} \&[3ex] {\cont \times \plant} \\
                \system \& \cont
                \arrow["{(\proj_{\cont}, \proj_{\plant})}", from=1-1, to=1-2] 
                \arrow["{\update_\System}"', from=1-1, to=2-1]
                \arrow["{\update_\Cont}", from=1-2, to=2-2]
                \arrow["{\proj_\cont}"', from=2-1, to=2-2] 
            \end{tikzcd}
            \\
            \begin{tikzcd}[ampersand replacement=\&]
                \system \times 1 \&[6ex] {\plant \times \env \times \cont} \\
                \system \& \plant
                \arrow["{(\proj_\plant, \proj_{\env \times \cont})}", from=1-1, to=1-2] 
                \arrow["{\update_\System}"', from=1-1, to=2-1]
                \arrow["{\update_\Plant}", from=1-2, to=2-2]
                \arrow["{\proj_\plant}"', from=2-1, to=2-2] 
            \end{tikzcd}
        \end{gathered}
    \end{equation}
\end{remark}

In this setting, the environment's role is to produce undesirable signals, or disturbances, the controller and plant must adapt to.
The plant-controller subsystem has no control over it, since $\Env$ is an autonomous system, i.e. it does not receive inputs from controller, plant or any other system.
In~\cite{wonham1976towards} Wonham calls the environment a ``convenient fiction'' which can be used as a placeholder for disturbances that $\Cont$ is designed to compensate.
We will comment more on this point later.

The controller's job is to \emph{regulate} plant and environment, according to some notion of regulation. Formally, we define:

\begin{definition}[Regulation problem]
    \label{def:regulaitonProblem}
    A \textit{regulation problem} (or \textit{reguland}~\cite{conant1970every}) is a triple $(\Env, \Plant, \Tilde \goal \subseteq \env \times \plant)$.
\end{definition}

$\Tilde\goal$ is a set of \textit{target states} (also known as goal states, or references). This set specifies what the controller is supposed to achieve.
We define it as a subset of $\env \times \plant$ rather than just $\plant$, which means that the desired states of the plant can also depend on the state of the environment.
We then define the set $\goal = \cont \times \tilde{\goal}$ of states of the complete system in which the plant-environment components are in a target state.
By slight abuse of terminology, we also refer to the elements of $\goal$ as target states.
A regulation problem defines all the data we need to formulate the IMP.
However, for the IMP to hold we need some additional assumptions, which amount to properties that we impose on the regulation problem.

We should note at this point that the dynamics of $\System$ might exit and enter the set of target states $\goal$.
Thus, in order to say that $\Cont$ actually regulates $\System$, we need to make assumptions regarding the way $\goal$ fits in the dynamics of $\System$:

\begin{assumption}[Regulation condition and attracting full system]
    \label{ass:regulationCondition}
    $\Cont$ solves its regulation problem, meaning there exists an attracting subsystem (\Cref{def:attractingSubsystem}), that we shall call the \textit{attracting full system} $\System^* \into \System$ such that, on states, $\system^* \subseteq \goal$.
\end{assumption}

The idea of this assumption is that no matter what state the whole system starts in, there eventually will be a time at which the environment-plant system is in $\goal$ \emph{and} it will remain there indefinitely.
Note that there is always a maximal subsystem of $\System$ contained in $\goal$ (just take the union of all such subsystems), but (1) it might be empty and (2) it might not be attracting.
\Cref{ass:regulationCondition} guarantees that neither of these is the case.

The next assumptions buy some extra structure so that we can ultimately find a way to compare $\Cont$ and $\Env$.

\begin{construction}[Attracting controller]
    \label{const:attractingController}
    Even though $\goal$ contains all states of $\Cont$, this might not be true anymore for $\system^*$.
    We define the set of \textit{attracting control states} as
    \begin{equation*}
        \contstar \coloneqq \{ s_\Cont \in \cont \mid \exists s_\Env \in \env, s_\Plant \in \plant, (s_\Env, s_\Plant, s_\Cont) \in \system^* \}.
    \end{equation*}
    It is obtained along with a surjection $\proj_{\contstar}: \system^* \rrightarrow \contstar$ by taking the image of $\system^*$ under the projection map ${\proj_\cont : \system \rrightarrow \cont}$:
    \begin{equation}
        \label{eq:active.part.proj}
        \begin{tikzcd}[ampersand replacement=\&]
            {\system^*} \& {\contstar} \\
            \system \& \cont
            \arrow["{\proj_\cont}"', from=2-1, to=2-2] 
            \arrow[from=1-1, to=2-1] 
            \arrow[dashed, from=1-2, to=2-2] 
            \arrow["{\proj_{\contstar}}", dashed, from=1-1, to=1-2] 
        \end{tikzcd}
    \end{equation}
    $\cont^*$ supports a closed dynamics given by that of $\cont$ restricted to the states in $\cont^*$, and the inputs from $\plant^*$ defined as:
    \begin{equation}
        \plant^* \coloneqq \{ s_\Plant \in \plant \mid \exists s_\Env \in \env, s_\Cont \in \cont, (s_\Env, s_\Plant, s_\Cont) \in \system^* \}.
    \end{equation}
    This gives a system, the \textit{attracting controller} $\Contstar : \Sys{\binom{\plant^*}{\cont^*}}$, with interface $\binom{\plant^*}{\cont^*}$ and dynamics
    \begin{align}
    \label{eqn:attractingControllerDynamics}
        \update_\Contstar : \cont^* \times \plant^* & \to \cont^*\\
        (s_\Cont, s_\Plant) &\mapsto \update_\Cont(s_\Cont, s_\Plant) \nonumber
    \end{align}
    which is guaranteed to be well-defined since $\System^*$ is a closed system.
    This allow us to promote $\pi_{\Cont^*}$ to a full map of systems:
    \begin{equation}
        \begin{tikzcd}[ampersand replacement=\&]
            {\System^*} \& {\Contstar}
            \arrow["\proj_{\Contstar}", from=1-1, to=1-2] 
        \end{tikzcd}
    \end{equation}
\end{construction}

We would like to show next that the attracting controller $\Contstar$ tracks $\Env$ (or a subsystem of $\Env$ whose states are contained in $\goal$), but $\Env$ is autonomous, while $\Contstar$ isn't, so $\Contstar$ can't model $\Env$ according to \cref{rmk:HWIMP}.

To resolve this issue, following~\cite{wonham1976towards, hepburn1981internal, hepburn1984structurally, hepburn1984error}, we assume that the controller $\Cont$ operates in a particular way.
When the system is outside the target states, the controller is in a \emph{closed loop} regime in which its evolution depends on the state of the plant, by virtue of the inputs received from it.
It is however allowed to switch to an \emph{open loop} strategy.
In particular, when the system is in a target state, we assume that the controller operates purely in an open loop setting, in which its outputs and state changes don't depend on the state of the plant.

This assumption comes from thinking about the controller as performing \emph{error feedback}: the controller measures how far the plant-environment system is from a target state, and counteracts accordingly, so that when inside the set of target states, the controller essentially operates in an open loop regime.
This assumption plays an important role in certain areas of control theory, for instance in the ``disturbance decoupling problem''~\cite{isidori1981nonlinear}, and cognitive science, where it corresponds to the idea of ``decouplability'' or ``detachability''~\cite{thobani2024triviality}.
However, it is also quite a strong assumption and there are plenty of control strategies that violate it.
This includes the case where the controller operates by error feedback in the presence of noise and the target set includes some margin for error around the reference point.
In this case the controller must react to the noise in order to keep the system inside the margin, so it is not autonomous.

\begin{assumption}[Autonomous attracting controller]
    \label{ass:autonomousController}
    There exists an autonomous system $\Contstaraut : \Sys{\binom{1}{\contstaraut}}$, that we call the \textit{autonomous attracting controller}, admitting a map $\Contstar \to \Contstaraut$ which is the identity on states and the projection $!:\cont^* \times \plant^* \to 1$ on inputs:
    \begin{equation}
        \begin{tikzcd}
            {\Cont^*} & \Contstaraut
        	\arrow["{(\id, !)}", from=1-1, to=1-2]
        \end{tikzcd}
    \end{equation}
    We still get a projection down from $\System^*$ as the dashed arrow below, which we note is surjective on inputs:
    \begin{equation}
        \begin{tikzcd}
        	{\System^*} \\
        	{\Cont^*} & \Contstaraut
        	\arrow["{\proj_{\Cont^*}}"', from=1-1, to=2-1]
        	\arrow["{\proj_{\Contstaraut}}", dashed, from=1-1, to=2-2]
        	\arrow["{(\id, !)}"', from=2-1, to=2-2]
        \end{tikzcd}
    \end{equation}
\end{assumption}

Given \cref{def:model}, the following thus holds:

\begin{theorem}[\textit{Internal Model Principle}]
    \label{th:impFullSystem}
    Let $\sys S$ be a system subject to \Cref{ass:factorisation,ass:regulationCondition,ass:autonomousController}.
    Then $\Contstaraut$ models the attracting full system $\System^*$ via the projection $\proj_{\Contstaraut}$.
\end{theorem}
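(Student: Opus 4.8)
The plan is to simply unfold \cref{def:model} for the referent $\System^*$ and referrer $\Contstaraut$ and verify its two clauses for the map $\proj_{\Contstaraut}$. First I would observe that there is nothing to prove about $\proj_{\Contstaraut}$ being a map of systems at all: it is handed to us in \Cref{ass:autonomousController} as the commuting dashed arrow, i.e.\ as the composite $\proj_{\Cont^*} \comp (\id,!)$, so the dynamics-preservation square is already in place and I would import it directly rather than re-derive it. Everything therefore reduces to checking the two surjectivity requirements of \cref{def:model}: surjectivity of the part on states, and fibrewise surjectivity of the part on inputs.

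For the part on states, I would use that both $\System^*$ and $\Contstaraut$ are autonomous, so $(\id,!)$ is the identity on states; by \cref{const:comp-of-sys-maps} the state part of the composite is then $(\proj_{\Contstaraut})_\sts = (\proj_{\Cont^*})_\sts \comp \id = \proj_{\contstar}$, the surjection $\system^* \rrightarrow \contstar$ produced in \cref{const:attractingController}. Since $\contstar$ was \emph{defined} to be the image of $\system^*$ under $\proj_\cont$, and $\proj_{\contstar}$ is precisely the corestriction of $\proj_\cont$ to that image, it is surjective by construction. This discharges clause (1), and I would stress that this is where essentially all of the theorem's content lives — it was pre-loaded into the way $\contstar$ was built.

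For the part on inputs, both interfaces have the singleton $1=\{\ast\}$ as their input set, so for each $s \in \system^*$ the fibrewise map $(\proj_{\Contstaraut})_\obs(s,-)\colon 1 \to 1$ is the unique endomap of the terminal set and is surjective exactly when the index $\system^*$ is inhabited. Hence the only genuine thing to confirm is that $\System^*$ is non-empty. This I would obtain from \Cref{ass:regulationCondition}, which furnishes $\System^*$ as an attracting subsystem, together with the remark following \cref{def:attractingSubsystem} that an attracting subsystem of a non-empty system is itself non-empty; clause (2) then follows.

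The main obstacle here is conceptual rather than computational: recognising that, once the map-of-systems status of $\proj_{\Contstaraut}$ is borrowed from \Cref{ass:autonomousController}, the statement asserts nothing beyond the two surjectivity clauses of \cref{def:model} — surjectivity on states, already secured by the image construction of $\contstar$, and surjectivity on inputs, automatic from the triviality of the interfaces and the non-emptiness of $\System^*$. I expect no real calculation to be required, only careful bookkeeping of which data are autonomous and which of the surjections is the identity.
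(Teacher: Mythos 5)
Your proposal is correct and takes essentially the same route as the paper's proof: the map-of-systems structure of $\proj_{\Contstaraut}$ is imported from \Cref{ass:autonomousController}, surjectivity on states is exactly the image construction of $\contstar$ from \cref{const:attractingController}, and surjectivity on inputs is immediate because the target interface is the singleton $1$. One small imprecision: your claim that the fibrewise input map $1 \to 1$ is surjective ``exactly when $\system^*$ is inhabited'' is not right --- the surjectivity condition of \cref{def:model} is quantified over $s \in \system^*$, so each such map $1 \to 1$ is automatically surjective and the condition holds (vacuously, if $\system^*$ were empty), making your non-emptiness detour via \Cref{ass:regulationCondition} unnecessary, though harmless.
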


\begin{proof}
     \Cref{ass:factorisation} implies the existence of a map ${\proj_\Cont : \System \to \Cont}$.
     \Cref{ass:regulationCondition} prescribes that it restricts to $\proj_{\Contstar} : \System^* \to \Contstar$ (see \cref{const:attractingController}).
     However this is \emph{not} a model.
     \Cref{ass:autonomousController} says that $\proj_{\Contstaraut} : \System^* \to \Contstaraut$ is a model by assuming the existence of an autonomous $\Contstaraut$, making its part on inputs
     surjective.
\end{proof}

We note that this version of the IMP is not the same given by \cite{wonham1976towards, hepburn1981internal, hepburn1984structurally, hepburn1984error, huang2018internal, wonham2019supervisory} because it involves a map of systems between the attracting full system $\System^*$ and the autonomous attracting controller $\Contstaraut$.
It is however a necessary condition for the results in \cite{hepburn1981internal, hepburn1984structurally, hepburn1984error}, as we will see shortly, and we believe that it gives a fresh and mostly novel perspective on the classical IMP, where the model doesn't represent knowledge about just the environment, but about the full system that contains environment, plant and the controller itself, thus \emph{embedding}, in the sense of \cite{demski2019embedded}, the (autonomous attracting) controller in the (attracting) full system it models.

Furthermore, the kind of model obtained through \Cref{th:impFullSystem} might not be what the name ``internal model principle'' immediately suggests, especially when considering more traditional accounts such as \cite{hepburn1984error}.
In fact, a model as in \Cref{def:model} is a map between systems, therefore existing \emph{externally} from the systems themselves, at the level of their structural descriptions given by an external designer, or control theorist.
It follows that $\pi_{\Contstaraut}$ witnesses the fact that the way $\Contstaraut$ works necessarily reflects the way $\System^*$ does, indicating that $\Cont$ must have been designed to somehow predict the way $\System$ works under a regulation condition (\Cref{ass:regulationCondition}).
This terminology is used in contrast to a more explicit ``internal model'' expressed in terms of an actual \emph{mechanism}, part of a given system.

To obtain what Hepburn and Wonham called the IMP on the other hand, we need another assumption.
This is quite strong, and hard to motivate in practice as far as we can tell.
First however, we note that ust like $\pi_\Cont$ restricts to the attracting subsystems as $\pi_\Contstar$, so does the projection ${\proj_\Env: \System \rrightarrow \Env}$ described in~\Cref{rmk:projections}.
\begin{remark}[Attracting environment]
    The attracting full system $\System^*$ induces an analogous global attractor $\Env^*$ in $\Env$ (again, obtained by restricting $\Env$ to those states which are part of at least one state of $\System^*$).
    We will call the system $\Env^*$ the \textit{attracting environment}, by analogy with the autonomous attracting controller in \Cref{const:attractingController}.
\end{remark}
The next assumption is arguably the core of the IMP as described in~\cite{hepburn1981internal, hepburn1984structurally, hepburn1984error}.

\begin{assumption}
    \label{ass:allEnvironmentStatesExactlyOnce}
    There is an isomorphism of systems $\System^* \iso \Env^*$, meaning that for each environment state $s_\Env \in \env^*$, there is \emph{exactly one} $s \in \system^*$ such that $\proj_\env(s) = s_\Env$.
\end{assumption}

This assumption can be used to obtain a map of systems between the autonomous attracting controller $\Contstaraut$ and the attracting environment $\Env^*$, which leads to a complete reformulation of~\cite{hepburn1984error}:

\begin{theorem}[\textit{Internal Model Principle (Hepburn and Wonham)}]
    \label{th:imp}
    Let $\sys S$ be a system subject to \Cref{ass:factorisation,ass:regulationCondition,ass:autonomousController,ass:allEnvironmentStatesExactlyOnce}.
    Then $\Contstaraut$ models the attracting environment $\Env^*$ via the dashed map below:\footnote{Such a composite is defined by \cref{const:comp-of-sys-maps}.}
    \begin{equation}
        \begin{tikzcd}[ampersand replacement=\&]
            \Env^* \&[2ex]\&[2ex] {\Contstaraut} \\[-2ex]
            \& {\System^*}
            \arrow["\modelEnv", dashed, from=1-1, to=1-3] 
            \arrow["\sim"{rotate=-20,pos=0.35}, "{\text{\Cref{ass:allEnvironmentStatesExactlyOnce}}}"', curve={height=6pt}, from=1-1, to=2-2]
            \arrow["{\modelSys}", "{\text{\Cref{ass:autonomousController}}}"', curve={height=6pt}, from=2-2, to=1-3] 
        \end{tikzcd}
        \label{eqn:IMPPartiallyObservable}
    \end{equation}
\end{theorem}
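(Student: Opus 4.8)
The plan is to exhibit $\modelEnv$ as a composite of two maps that are each already known to be a model, and then to use the fact that models are closed under composition. The key simplifying observation is that \emph{both} endpoints are autonomous: $\Env^*$ is a subsystem of the autonomous $\Env$, and $\Contstaraut$ is autonomous by \Cref{ass:autonomousController}. Hence, by \Cref{rmk:HWIMP}, verifying that $\modelEnv$ is a model reduces to checking that it is a map of systems whose state part is surjective — the fibrewise input-surjectivity condition is automatic, since the inputs of both systems are the singleton $1$ and the map $1 \to 1$ is surjective for every state.

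First I would record the two constituent maps. By \Cref{ass:allEnvironmentStatesExactlyOnce} there is an isomorphism of systems $\System^* \iso \Env^*$; its inverse is a map of systems $\Env^* \to \System^*$ whose state part is a bijection. By \Cref{th:impFullSystem}, the projection $\modelSys = \proj_{\Contstaraut} : \System^* \to \Contstaraut$ is a model, so its state part $\system^* \to \contstaraut$ is surjective. I then take $\modelEnv$ to be their composite, exactly the dashed map in the diagram, which is a well-defined map of systems by \Cref{const:comp-of-sys-maps}.

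The remaining verification is surjectivity of the state part $\modelEnv_\sts$, which is the composite $\env^* \to \system^* \to \contstaraut$ of a bijection followed by a surjection, hence surjective; this finishes the argument. More conceptually, the step I would isolate as a reusable lemma is that the composite of two models is again a model: on states one composes two surjections, and on inputs one uses the chart-composition formula of \Cref{const:comp-of-sys-maps}, namely $(\model \comp \model')_\obs(x,i) = \model'_\obs\big(\model_\sts(x), \model_\obs(x,i)\big)$, which for each fixed $x$ is a composite of two fibrewise surjections and so is surjective. Since an isomorphism of systems is trivially a model, this lemma yields the theorem at once.

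I expect no serious obstacle, as the statement is essentially bookkeeping layered on top of \Cref{th:impFullSystem} and \Cref{ass:allEnvironmentStatesExactlyOnce}. The only points needing care are (i) confirming that $\Env^*$ genuinely is an autonomous subsystem, i.e. that $\env^* = \proj_\env(\system^*)$ is forward-invariant under $\update_\Env$ — which follows from the map-of-systems equation for $\proj_\Env$ together with forward-invariance of $\System^*$ — and (ii) handling the input component correctly in the general (non-autonomous) form of the composition-of-models lemma; here that component collapses to the trivial map $1 \to 1$, so any difficulty disappears in this instance.
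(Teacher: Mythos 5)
Your proposal is correct and takes essentially the same route as the paper: the paper's own (largely implicit) proof is precisely to define $\modelEnv$ as the composite, via \Cref{const:comp-of-sys-maps}, of the inverse of the isomorphism $\System^* \iso \Env^*$ supplied by \Cref{ass:allEnvironmentStatesExactlyOnce} with the model $\modelSys$ supplied by \Cref{th:impFullSystem}, so that surjectivity on states is a bijection followed by a surjection and the input condition is trivial for autonomous systems. Your extra checks (forward-invariance of $\env^*$ and the composition-of-models lemma) merely make explicit what the paper leaves folded into the diagram and its footnote.
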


This is the version of the IMP presented by, for instance~\cite{hepburn1981internal, hepburn1984structurally, hepburn1984error, huang2018internal, wonham2019supervisory}.
As seen in the diagram, this relies on \cref{th:impFullSystem}, via \Cref{ass:autonomousController}.
We thus argue that the stronger result is the one given by \Cref{th:impFullSystem}, since \Cref{th:imp} is only obtained after adding the rather strong \Cref{ass:allEnvironmentStatesExactlyOnce}.

In the next section, we will link the notion of model given by \cref{def:model} to that of Bayesian interpretation put forward by~\cite{virgo2021interpreting, biehl2022interpreting}.
This is somewhat surprising, because models in Bayesian statistics and models according to definition \cref{def:model} are quite different things and it's not a priori obvious that they would be connected at all.

\section{Bayesian updates and interpretations}
\label{sec:interpretationMap}
The notion of interpretation map was introduced in~\cite{virgo2021interpreting} as a map between the state of a system and probability distributions on the states of the external world describing the former's beliefs about the latter.
The concept has since then been developed further in a category theory context in~\cite{virgo2023unifilar}.
The goal was to understand what it means to \emph{interpret} a physical system as performing Bayesian inference; that is, what properties must a physical system have in order to be able to make the claim that it \emph{has a Bayesian prior} over some hidden variable?
This question, and the approach taken to it, is similar in spirit to~\cite{horsman2014does}, with the main difference being the focus on Bayesian reasoning rather than computation.

This notion has also been applied to partially observable Markov decision processes~\cite{biehl2022interpreting}, sharing some mathematical background with the state-space control theoretic approach behind the internal model principle.
In this section we briefly summarise some of the results in~\cite{virgo2021interpreting, biehl2022interpreting} that will help us later establish connections between the internal model principle and Bayesian reasoning ideas.

\subsection{Markov categories}
Markov categories are a synthetic approach to probability theory and formalise the compositional structure of non-deterministic processes that behave like Markov kernels.
The canonical reference for Markov categories is~\cite{fritz2020synthetic}.
We begin with the definition of a category.

\begin{definition}[Category]
    A category $\Cat$ consists of:
    \begin{itemize}
        \item a class of \textit{objects}, $\ob(\Cat)$, e.g. $A, B, C, \dots$,
        \item a class of \textit{maps}, \textit{arrows} or \textit{morphisms}, $\hom(\Cat)$ (these three terms are interchangeable),
        \item for each arrow in $\hom(\Cat)$, a \textit{source} and a \textit{target}, which are objects, i.e. elements of $\ob(\Cat)$,
        if an arrow $f$ has source $A$ and target $B$ then we often write it as $f:A\to B$, and we say that $A\to B$ is the arrow's \textit{type},
        \item for each object of $\ob(\Cat)$, an \textit{identity morphism} $\id_A: A \to A$,
        \item a binary operation $\comp$ on arrows called the \textit{composition rule}, such that given morphisms $f:A\to B$ and $g: B\to C$, their composite $f \comp g$ is an arrow with type $A \to C$; composition is defined when (and only when) the target of one arrow equals the source of another, and must obey the following laws:
        \begin{itemize}
            \item \textit{associativity}: given morphisms $f:A\to B$, $g:B\to C$ and $h:C\to D$, we must have $f \comp (g \comp h) = (f \comp g) \comp h$,
            \item left and right \textit{unit laws}: for every pair of objects $A, B$ and morphism $f : A \to B$, we must have $\id_A \comp f = f = f \comp \id_B$.
        \end{itemize}
        In other works one will often find composition written as $g\circ f$ instead of $f\comp g$.
    \end{itemize}
\end{definition}

Next we introduce a graphical notation, string diagrams, for a special class of categories with extra structure, known as symmetric monoidal categories, that will be used throughout this work.
We give an abridged definition that gives enough information to understand the rest of the work, skipping over some important details (the so-called {coherence laws}) that are needed for the graphical language to be formally valid~\cite{borceuxHandbookCategoricalAlgebraVol21994}.
For a concise formal treatment of symmetric monoidal categories and their graphical language, a good reference is~\cite{baez2010physics}; see also~\cite{selinger2010survey} for more on string diagrams and~\cite{coecke2017picturing} for a much more comprehensive beginner-friendly introduction.

\begin{definition}[Symmetric monoidal category]
    A \textit{symmetric monoidal category} (also known as a \textit{process theory}~\cite{coecke2017picturing}) is a category $\Cat$ with the following additional structure:
    \begin{itemize}
        \item objects as wires, or strings, $A, B, C, \dots$
            \begin{equation}
                \scalebox{.8}{\tikzfig{figures/tikzit/CatObjects}}
            \end{equation}

        \item morphisms as boxes, $f: A \to B$, $g: B \to C$, $h: C \to D, \dots$, converting objects to new objects and forming processes (combinations of objects and morphisms to track their changes)
            \begin{equation}
                \scalebox{.8}{\tikzfig{figures/tikzit/CatMorphisms}}
            \end{equation}

        \item composition appears in two forms, sequential and parallel; sequential composition, denoted by $\comp$, corresponds to the connection of wires with the appropriate type ($A$'s with $A$'s, $B$'s with $B$'s, etc.) such that
              \begin{equation}
                  \scalebox{0.8}{\tikzfig{figures/tikzit/CatCompositionSequential}}
              \end{equation}

              while parallel composition,
              denoted by $\otimes$, is simply depicted as
              \begin{equation}
                  \scalebox{0.8}{\tikzfig{figures/tikzit/CatCompositionParallel}}
              \end{equation}
    \end{itemize}
    The composition operations must obey:
    \begin{itemize}
        \item associativity, for sequential composition given by (equalities) $f \comp (g \comp h) = (f \comp g) \comp h = f \comp g \comp h$ and for parallel composition by (isomorphism) $f \otimes (g \otimes h) \cong (f \otimes g) \otimes h \cong f \otimes g \otimes h$,
        \item identity, for sequential composition in the form of an identity box (usually not drawn, as on the right hand side below)
            \begin{equation}
                \scalebox{0.8}{\tikzfig{figures/tikzit/CatIdentity}}
            \end{equation}
            and for parallel composition in the form of a distinguished wire $I$, often drawn as no wire,
            \begin{equation}
                \label{eqn:monoidalUnit}
                \scalebox{1}{\tikzfig{figures/tikzit/CatMonoidalUnit}}
            \end{equation}
            such that for every object $A \in \Cat$, $I \otimes A \cong A \cong A \otimes I$, i.e. in string diagrammatic terms, any wire can be seen as having infinitely many not-drawn identity wires in parallel,
        \item and symmetry, in the form of a morphism $\sigma_{A,B} : A \otimes B \to B \otimes A$, drawn as two wires crossing:
        \begin{equation}
            \scalebox{0.8}{\tikzfig{figures/tikzit/CatSymmetric}}
            \label{fig:CatSymmetric}
        \end{equation}
        such that $\sigma_{A,B} \comp \sigma_{B,A} = \mathrm{id}_{A \otimes B}$.
    \end{itemize}
    The isomorphisms witnessing associativity, unitality and symmetry satisfy coherence laws~\cite[Definition~6.1.1]{borceuxHandbookCategoricalAlgebraVol21994}, as mentioned before, will not be of concern here. We will however state the following properties in the graphical notation, as they will be regularly used in string diagrams manipulations later on:
    \begin{itemize}
        \item interchange law
        \begin{equation}
            \scalebox{0.7}{\tikzfig{figures/tikzit/CatElevator}}
            \label{fig:CatSliding}
        \end{equation}
        \item naturality of the swap map
        \begin{equation}
            \scalebox{0.7}{\tikzfig{figures/tikzit/CatNaturalitySwap}}
            \label{fig:CatNaturalitySwap}
        \end{equation}
    \end{itemize}
\end{definition}

\begin{definition}[Markov category]
    Markov categories (or affine cd-categories~\cite{cho2019disintegration}) are symmetric monoidal categories where every object is equipped with extra structure that reproduces, synthetically, the algebra of \textit{non-deterministic} processes resembling (normalised) Markov kernels. This structure includes \textit{copy} and \textit{delete} operations doing what their names explicitly suggest: the former creates two copies of its input, while the latter deletes its input:
    \begin{equation}
        \scalebox{0.8}{\tikzfig{figures/tikzit/CatCopyDelete}}
        \label{eqn:CatCopyDelete}
    \end{equation}

    Copy and delete obey the following laws (for readability, strings aren't labelled here, but imagine them all having the same label):
    \begin{equation}
        \scalebox{0.8}{\tikzfig{figures/tikzit/CatCommutativeComonoidLaws}}
    \end{equation}
\end{definition}

For extra clarity, we adopt the convention introduced by~\cite{virgo2021interpreting} to denote deterministic maps as boxes:
\begin{equation}
    \scalebox{0.8}{\tikzfig{figures/tikzit/DeterministicMorphism}}
\end{equation}
and draw general (possibly non-deterministic) ones as boxes with a curved right edge:
\begin{equation}
    \scalebox{0.8}{\tikzfig{figures/tikzit/MarkovKernel}}
\end{equation}
In text, we write the type of a deterministic morphism as $f:A \to B$, whereas for a general (possibly non-deterministic) morphism we use the special notation $f:A\markto B$.

The defining property of deterministic maps in Markov categories is
\begin{equation}
    \scalebox{0.8}{\tikzfig{figures/tikzit/CatCopyDeterministic}}
\end{equation}
which is sometimes called the \textit{naturality of copy}.
Naturality doesn't apply to non-deterministic maps, intuitively because copying the result of a die roll is not the same as rolling two dice.
On the other hand, all maps in a Markov category satisfy the normalisation law, or \textit{naturality of delete}, stating that mapping an input to an output and then deleting the output, is the same as deleting the input:
\begin{equation}
    \label{eqn:normalisation}
    \scalebox{0.8}{\tikzfig{figures/tikzit/CatNormalisation}}
\end{equation}

For further technical details, and discussions, we refer the reader to~\cite{fritz2020synthetic, perrone2023markov}.

\begin{example}[$\FinStoch$~\cite{fritz2020synthetic}]
    For a given set $Y$, denote by $\prob(Y)$ the set of all finitely-supported probability distributions over it.
    There is a Markov category $\FinStoch$ whose objects are finite sets and whose morphisms from $X$ to $Y$, $X \markto Y$, called \textit{Markov kernels}, are defined as functions $X \to \prob(Y)$.
    For a Markov kernel $f:X\markto Y$ we write $f(y \mid x)$ for $f(x)(y)$, the probability assigned to $y$ when the kernel is given $x$ as input.

    Sequential composition of Markov kernels $f: X \markto Y$, $g: Y \markto Z$ is given by the Markov kernel so defined (i.e.~the Chapman--Kolmogorov equation):
    \begin{align}
        f \comp g : X & \markto Z\nonumber \\
        (f\comp g)(z \mid x) & \coloneqq \sum_{y \in Y} g(z \mid y) f(y \mid x)
    \end{align}
    while parallel composition is, on objects, the Cartesian product of sets $X \otimes Y \coloneqq X \times Y$, and for morphisms $f : X \markto Y$ and $f' : M \markto Y'$ given by the Markov kernel so defined:
    \begin{align}
        f \otimes f' :X \times X' & \markto Y \times Y'
        \nonumber \\
        (f\otimes f')(y,y'\mid x,x') & \coloneqq f(y\mid x) f'(y'\mid x').
    \end{align}

    The unit for parallel composition is the singleton $\Unit = 1 = \{\ast\}$.
    The copy and delete morphisms for a set $X$ are given, respectively, by the Markov kernel $\Copy_X: X \to X \times X$ mapping $x \in X$ to the Dirac distribution $\delta_{(x,x)}$, and the unique map $\del_X : X \to 1$, mapping $x\in X$ to the only normalised probability distribution that exists over $\{*\}$.

    Deterministic morphisms in this Markov category are those that are indeed deterministic, i.e.~they map every element of their domain to a Dirac distribution. In other words, the deterministic maps of $\FinStoch$ are all and only the usual set-theoretic functions, see~\cite{fritz2020synthetic} for details.
\end{example}

The following example will be used in our results:

\begin{example}[$\RelPlus$ (or $\SetMulti$)~\cite{fritz2020synthetic, stein2023probabilistic}]
    For a set $Y$, denote by $\possplus(Y)$ the set of all non-empty subsets of $Y$.
    There is a Markov category $\RelPlus$ whose objects are sets and whose morphisms from $X$ to $Y$ are functions $X \to \possplus(Y)$, corresponding to \textit{left-total relations}.
    Sequential composition of left-total relations $f: X \markto Y$, $g: Y \markto Z$ is given by the left-total relation so defined:
    \begin{align}
    \label{eq:relpluscomp}
        f \comp g: X &\longrightarrow \possplus(Z) \nonumber \\
        x &\longmapsto \{ z \in Z \mid \exists y \in Y, y \in f(x) \text{ and } z \in g(y)\},
    \end{align}
    while parallel composition is, on objects, the Cartesian product of sets $X \otimes Y \coloneqq X \times Y$, and for morphisms $f : X \markto Y$ and $f' : M \markto Y'$ given by the left-total relation so defined:
    \begin{align}
        f \otimes f' : X \times M \longrightarrow & \possplus(Y \times Y') \nonumber \\
        (x,x') \longmapsto & \{(y,y') \in Y \times Y' \mid  \nonumber \\& \qquad y \in f(x) \text{ and }
        y' \in f'(x')\}.
    \end{align}
    The unit for parallel composition is the singleton $\Unit = 1 = \{\ast\}$.

    The copy and delete morphisms for a set $X$ are given, respectively, by the left-total relation $\Copy_X: X \to X \times X$ mapping $x \in X$ to the singleton $\{(x,x)\}$, and the unique map $\del_X : X \to \possplus(1)$.

    Deterministic morphisms in this Markov category are those that map every element of their domain to a singleton. In other words, the deterministic maps of $\RelPlus$ are all and only the usual set-theoretic functions~\cite{fritz2020synthetic}.
\end{example}

The morphisms in $\RelPlus$ can be seen as \textit{possibilistic} Markov kernels.
For each element in the codomain we specify a set of possible elements of the domain that it might map to, but we don't assign probabilities to these possibilities.
Possibilistic representations of uncertainty are less common than probabilistic ones. Nonetheless they are well established and have a long history in different fields, including control theory~\cite{byrnes2003limit, rawlings2017model, aubin2011viability, tanwani2018well, brogliato2020dynamical}, artificial intelligence~\cite{denoeux2010representing} and automata theory~\cite{rabin1959finite}.

\subsection{Bayesian reasoning and interpretations in Markov categories}

We now introduce the main ideas involved in the notion of Bayesian interpretations.
To aid intuition, we refer to maps of type $\prior: \Unit \markto \Hiddenstate$ for any object $\Hiddenstate$ as \textit{distributions} and to deterministic maps $\hiddenstate : \Unit \to \Hiddenstate$ of that type as \textit{elements} of $\Hiddenstate$.
We also consider maps $\belief: \Parameter \markto \Hiddenstate$ to correspond to \textit{parametrised families of distributions} (also called ``channels'' in~\cite{cho2019disintegration, jacobs2020channel}), with each element $\parameter: \Unit \to \Parameter$ a parameter determining a distribution $\belief(\Hiddenstate \mid \parameter)$ or $\belief(\parameter)$ over $\Hiddenstate$.

Furthermore, given a map $\likelihood: \Hiddenstate \markto \Observation$ and an element $x: \Unit \to \Observation$ we call  $\hiddenstate \comp \likelihood : \Unit \markto \Observation$ the distribution over $\Observation$ assigned to the element $\hiddenstate \in \Hiddenstate$ by $\likelihood$.
We write it as $\likelihood(\hiddenstate)$ or $\likelihood (\Observation \mid \hiddenstate)$.
In $\FinStoch$ this terminology and the notation coincide with its common usage in probability theory.

Next we recall the definitions of Bayesian inversion and conjugate prior for Markov categories with conditionals given, e.g. in~\cite{jacobs2020channel, fritz2020synthetic}.
We then generalise these two concepts, which brings us closer to the definition of a Bayesian interpretation.

\begin{definition}[Bayesian inversion]
    \label{def:bayesInference}
    In a Markov category, a Bayesian inversion~\cite{cho2019disintegration} of a map $\likelihood : \Hiddenstate \markto \Observation$ with respect to a distribution $\prior : \Unit \markto \Hiddenstate$ is a map $\posterior : \Observation \markto \Hiddenstate$ satisfying the following equation:
    \begin{equation}
        \label{eqn:bayesInference}
        \scalebox{0.8}{\tikzfig{figures/tikzit/Bayes}}
    \end{equation}
\end{definition}
In a general Markov category with $\likelihood$ and $\prior$ as above, a Bayesian inverse $\posterior$ is not guaranteed to exist.
However, many Markov of interest have a property known as ``having (all) conditionals''~\cite[Definition 11.5]{fritz2020synthetic}, which guarantees that they always exist, for every $\likelihood$ and $\prior$. This is the case in both $\FinStoch$ and $\RelPlus$.
However, although they always exist in our categories of interest, Bayesian inverses are not generally unique, meaning that for given $p$ and $f$ there might be multiple morphisms $f^\dagger$ satisfying \cref{eqn:bayesInference}. We discuss the reasons for this shortly.

To get a more concrete feeling for Bayesian inverses, we can examine the form \cref{eqn:bayesInference} takes when our Markov category is $\FinStoch$, where it corresponds to the equation
\begin{align}
    \label{eqn:bayesianInverseDefinitionInFinstoch}
    \prior(\hiddenstate) \likelihood(\observation \mid \hiddenstate)
    & = \likelihood (\observation \mid \prior(\hiddenstate)) \posterior(\hiddenstate \mid \observation) \nonumber \\
    & = \sum_{\hiddenstate' \in \Hiddenstate} \prior(\hiddenstate') \likelihood(\observation \mid \hiddenstate') \posterior(\hiddenstate \mid \observation),
\end{align}
from which we can derive the standard Bayes rule by dividing by $\likelihood (\observation \mid \prior(\hiddenstate))$, assuming it's positive ($\likelihood (\observation \mid \prior(\hiddenstate)) > 0)$:
\begin{align}
    \posterior(\hiddenstate \mid \observation)
    & = \frac{\prior(\hiddenstate) \likelihood(\observation \mid \hiddenstate)}{\likelihood (\observation \mid \prior(\hiddenstate))} \nonumber \\
    & = \frac{\prior(\hiddenstate) \likelihood(\observation \mid \hiddenstate)}{\sum_{\hiddenstate' \in \Hiddenstate} \prior(\hiddenstate') \likelihood(\observation \mid \hiddenstate')}.
\end{align}
As mentioned, the Bayesian inverse $\posterior$ is generally not unique.
In $\FinStoch$ this is because \cref{eqn:bayesianInverseDefinitionInFinstoch} doesn't constrain the value of $\posterior(\hiddenstate \mid \observation)$ in cases where $\likelihood (\observation \mid \prior(\hiddenstate)) = 0$.

Bayesian inversions can be used to inspire a notion of \emph{updating} of distributions over a (hidden) variable in response to observations~\cite{caticha2021entropy}.
For this, we consider $\Observation$ as observations generated from hidden variable $\Hiddenstate$ by $\likelihood : \Hiddenstate \markto \Observation$ (often called a \textit{statistical model} in a probabilistic context, including for instance in $\FinStoch$), the distribution $\prior : \Unit \markto \Hiddenstate$ above as a \textit{prior} distribution before an observation, and the map $\posterior : \Observation \markto \Hiddenstate$ as assigning new, \textit{posterior} distributions to the observations $\Observation$ generated via $\likelihood$.
In the case of $\FinStoch$, the map $f^\dagger$ can be seen as multiplying the prior by the \textit{likelihood} (i.e. the values of $f(y\mid x)$ for the given \textit{data} $y$) and then dividing by the \textit{evidence} $\sum_{\hiddenstate' \in \Hiddenstate} \prior(\hiddenstate') \likelihood(\observation \mid \hiddenstate')$ to obtain the posterior.

It is common to refer to distributions that are updated according to this process as \textit{(Bayesian) beliefs}, and to the process itself as \textit{(Bayesian) belief updating}.

In some cases, in place of the prior $\prior: \Unit \markto \Hiddenstate$ we might want a \textit{parametrised family} of priors $\belief: \Parameter \markto \Hiddenstate$.
The idea is that the prior is assumed to come from some family of distributions, say for instance Gaussians, and so we consider one prior for each value of the parameters $\Parameter$.
In this case, the Bayesian inverse $\posterior$ must depend on the parameter as well, because in general the Bayesian inverse depends on the prior.
This gives rise to the following definition.
\begin{definition}
    \label{def:parametrisedBayes}
    In a Markov category, we say that a map $\posterior : \Observation \otimes \Parameter \markto \Hiddenstate$ is a Bayesian inversion of a map $\likelihood : \Hiddenstate \markto \Observation$ with respect to a parametrised family $\belief: \Theta \markto \Hiddenstate$ if:
    \begin{equation}
        \label{eqn:bayesWithInputs}
        \scalebox{0.8}{\tikzfig{figures/tikzit/BayesWithInputs}}
    \end{equation}
\end{definition}

The next concept, called a \textit{conjugate prior}, expresses an important property that such a parametrised Bayesian inverse might have, namely that it factors through the original family of distributions $\belief$.
This is of particular importance to us because the map $\conj$ below can be seen as explicitly implementing the belief update, by updating the parameters such that the prior's parameter is updated to the posterior's parameter.
Conjugate priors were first expressed in string diagrams in this form in~\cite{jacobs2020channel}.

\begin{definition}[Conjugate prior for Bayesian inference]
    \label{def:conjugatePriorInference}
    We call $\belief$ a conjugate prior~\cite{jacobs2020channel} to $\likelihood: \Hiddenstate \markto \Observation$ if there exists a deterministic map $\conj: \Observation \otimes \Parameter \to \Parameter$ that satisfies the following:
    \begin{equation}
        \scalebox{0.8}{\tikzfig{figures/tikzit/ConjugatePrior}}
        \label{eqn:conjugatePriorInference}
    \end{equation}
\end{definition}
Note that the map $\conj \comp \belief$ is a Bayesian inverse of $\likelihood$ with respect to $\belief$, in the sense of \cref{def:parametrisedBayes}.
In Bayesian statistics, $\parameter\in\Parameter$ is sometimes called a \textit{hyperparameter} and $\hiddenstate\in \Hiddenstate$ a \textit{parameter}, but we will generally avoid this usage.

If we have a conjugate prior $\belief: \Parameter \markto \Hiddenstate$ to $\likelihood : \Hiddenstate \markto \Observation$, then every parameter $\parameter: \Unit \to \Parameter$ determines a Bayesian inversion $\belief(\conj(-,\parameter)): \Observation \markto \Hiddenstate$ of $\likelihood : \Hiddenstate \markto \Observation$ for $\prior(\parameter)$~\cite[Theorem 6.3]{jacobs2020channel}.
The role of $\conj$ is to turn parameters of prior distributions into parameters of posterior distributions by taking into account observations generated by $\likelihood$ according to the prior $\belief$.
In this way, the map $\conj$ implements belief updating. The usual informal notion of a prior being conjugate to $\likelihood$ if the posterior distribution is in the same family as the prior's is captured by having the map $\belief$ appear twice: once when mapping the initial parameter to the prior and again when mapping the updated parameter to the posterior.

The version of belief updating introduced so far can be seen as a version of \textit{Bayesian inference}, which means that the updated beliefs are about a constant hidden variable.
To see this, note that $\Hiddenstate$ gets copied, but never gets changed in \cref{eqn:bayesInference,eqn:bayesWithInputs,eqn:conjugatePriorInference}.

We need then a form of belief updating that corresponds to \textit{Bayesian filtering}, where the hidden variable also changes.
For this we can replace $\Copy_\Hiddenstate \comp \likelihood \otimes \id_\Hiddenstate: \Hiddenstate \markto \Observation \otimes \Hiddenstate$ with a map $\hiddenmodel : \Hiddenstate \markto \Hiddenstate \otimes \Observation$ that produces an observation $\Observation$ and may change $\Hiddenstate$ instead of just copying it.
The analogue of the Bayesian inversion (\cref{def:bayesInference}) with respect to a distribution $\prior: \Unit \markto \Hiddenstate$ is thus a map $\posteriorFiltering : \Observation \markto \Hiddenstate$ satisfying
    \begin{equation}
        \label{eqn:bayesFiltering}
        \scalebox{0.8}{\tikzfig{figures/tikzit/BayesFiltering}}
    \end{equation}
As with ordinary Bayesian inversions (\cref{def:bayesInference}), the map $\posteriorFiltering$ always exists in any Markov category with conditionals, including thus $\FinStoch$ and $\RelPlus$.
Note that setting $\hiddenmodel = \Copy_\Hiddenstate \comp \likelihood \otimes \id_\Hiddenstate : \Hiddenstate \markto \Observation \otimes \Hiddenstate$ recovers \cref{def:bayesInference}.

In Bayesian filtering, the idea is that $\hiddenmodel$ updates $\Hiddenstate$ and generates an observation $\Observation$.
Similarly to the Bayesian inference inversion, we can view $\prior$ as a prior distribution before the update of $\Hiddenstate$ and observation of $\Observation$, and $\posteriorFiltering : \Observation \markto \Hiddenstate$ as assigning posterior distributions over the updated $\Hiddenstate$ to the observations $\Observation$ generated via $\hiddenmodel$.
In $\FinStoch$, this corresponds to the following equation:
\begin{equation}
    \posteriorFiltering (\hiddenstate \mid \observation) = \frac{\sum_{\hiddenstate' \in \Hiddenstate} \prior(\hiddenstate') \hiddenmodel(\observation, \hiddenstate \mid \hiddenstate')}{\sum_{\hiddenstate', \hiddenstate'' \in \Hiddenstate} \prior(\hiddenstate') \hiddenmodel(\observation, \hiddenstate'' \mid \hiddenstate')}.
\end{equation}

The analogue to a conjugate prior to $\hiddenmodel: \Hiddenstate \markto \Hiddenstate \otimes \Observation$ in the filtering case is then the following~\cite{virgo2021interpreting}.

\begin{definition}[Conjugate prior for Bayesian filtering]
    \label{def:conjugatePriorFiltering}
    We call $\belief$ a conjugate prior for Bayesian filtering to $\hiddenmodel: \Hiddenstate \markto \Hiddenstate \otimes \Observation$ if there exists a deterministic map $\conj: \Observation \otimes \Parameter \to \Parameter$ that satisfies the following:
    \begin{equation}
        \scalebox{0.8}{\tikzfig{figures/tikzit/ConjugatePriorFiltering}}
        \label{eqn:conjugatePriorFiltering}
    \end{equation}
\end{definition}

The map $\conj : \Observation \otimes \Parameter \to \Parameter$ maps parameters of prior distributions over $\Hiddenstate$ to parameters of posterior distributions over \emph{updated} $\Hiddenstate$ (cf. \cref{def:conjugatePriorInference} where $\Hiddenstate$ is instead fixed), while taking into account observations generated by $\hiddenmodel$.
In a probabilistic context (including in $\FinStoch$), the map $\hiddenmodel$ where $\Hiddenstate$ is updated is often called a \textit{hidden Markov model} (or a \textit{discrete state-space model}), as opposed to the statistical model $\likelihood$ where $\Hiddenstate$ is fixed.

Reference~\cite{virgo2021interpreting} inverts this account, by starting with a map $\conj : \Observation \otimes \Parameter \to \Parameter$ and asking whether $\belief$ and $\hiddenmodel$ exist such that \cref{eqn:conjugatePriorFiltering} holds.
This leads to the following definition.
\begin{definition}[Bayesian filtering interpretation~\cite{virgo2021interpreting}]
    \label{def:interpretation}
    Given a deterministic map $\conj : \Observation \otimes \Parameter \to \Parameter$, a \textit{Bayesian filtering interpretation} of $\conj$ consists of a map $\belief: \Parameter \markto \Hiddenstate$ called the \textit{interpretation map}, together with a map $\hiddenmodel: \Hiddenstate \markto \Hiddenstate \otimes \Observation$ called \textit{hidden Markov model}, such that \cref{eqn:conjugatePriorFiltering} holds.
    In this context, \cref{eqn:conjugatePriorFiltering} is called the \textit{consistency equation}.
    A map $\conj : \Observation \otimes \Parameter \to \Parameter$ together with an interpretation $(\belief, \hiddenmodel)$ is called a \textit{reasoner}.
\end{definition}
Technically, this is a slight simplification of the main definition of~\cite{virgo2021interpreting}, since that paper allows $\conj$ to be stochastic rather than restricting it to be deterministic.
This terminology stems from the proposal to look at a the map $\conj$ as a physical system, whose states parametrise a Bayesian prior. The interpretation map specifies this parametrisation.
The consistency equation, \cref{eqn:conjugatePriorFiltering}, guarantees that when the prior updates, it does so in a way that is consistent with Bayesian filtering.
We note also that it is possible for the same map $\conj$ to admit many different Bayesian filtering interpretations.

Although the hidden Markov ``model'' $\hiddenmodel$ is a different kind of thing from the ``model'' $\model$ in the internal model principle, they are nevertheless related.
In the next section we will formally show this connection,
providing a Bayesian interpretation of the IMP and showing that interpretation maps can be seen as a generalisation of models in \cref{def:model}.

\section{Bayesian filtering interpretations from models}
\label{sec:IMPInterpretation}
To explain the connection between the notion of models in \cref{def:model} and Bayesian filtering interpretations, we will now show that every model induces a Bayesian filtering interpretation.
This means, in turn, that under the hypotheses of \cref{th:impFullSystem,th:imp}, the autonomous attracting controller $\Contstaraut$ has (among possibly many others) an interpretation involving the attracting full system $\System^*$ and one involving the attracting environment $\Env^*$.

\subsection{A possibilistic perspective on the IMP}
The existence of power sets allows to turn models (\cref{def:model}) \emph{upside down}, bringing us closer, as we shall briefly see, to the Bayesian filtering interpretation we are after.
We start from the following construction:
\begin{construction}
    \label{const:model}
    Given any surjective function $f : A \rrightarrow B$ we can define a left-total relation $f^{-1} : B \to \possplus(A)$ landing in the set $\possplus(A) = \poss(A) \setminus \{\varnothing\}$ of inhabited subsets of $A$:
    \begin{align}
    \label{eq:preimage}
        f^{-1}(b) \coloneqq \{a \in A \mid f(a)=b\} \quad \text{for } b \in B.
    \end{align}
    For a given $b \in B$ the subset $f^{-1}(b)$ is called the \textit{fibre of $f:A \to B$ over $b \in B$}.
    Both $f:A \rrightarrow B$ and $f^{-1}:B \to \possplus(A)$ can be seen as maps in $\RelPlus$ with types $f:A \to B$ and $f^{-1}:B \markto A$.
    Composing these two yields a closure map $\diamondsuit_f : A \markto A$, which \emph{closes} an element $a \in A$ by mapping it to the set of all $a' \in A$ that generate the same image:
    \begin{equation}
        \label{eq:closure}
        \diamondsuit_f(a) \coloneqq (f^{-1} \comp f)(a) = \{a' \in A \mid f(a') = f(a)\}.
    \end{equation}
\end{construction}

Instantiated for the case of a model $\model$ between autonomous systems, with map of states $\model_\sts : \sysX \to \sysM$, we can thus interpret the objects constructed above as follows:
\begin{enumerate}
    \item $\indmodel_\sts : \sysM \markto \sysX$ explicitly assigns to each state $\sysm \in \sysM$ the (inhabited) set of states of the system $\SysX$ it models.
    We can think of this map as encoding a \textit{belief}: $\indmodel_\sts(\sysm)$ is the belief (in the form of a set of possibilities) someone using the model $\model$ would have about the state of the system $\SysX$ knowing just $\sysm$.
    \item $\diamondsuit_{\model_\sts}$ completes an element $x \in \sysX$ with all the other states in $\sysX$ which can't be distinguished from $x$ by the model $\model$, i.e. states \emph{equicredible} with $x$.
\end{enumerate}

We now state the following proposition, which will be used in \cref{th:IMPmodelimpliesBayesmodel}.

\begin{proposition}
\label{th:IndexedModel}
    For autonomous systems $\SysM$ and $\SysX$, if $\SysM$ models $\SysX$ with respect to the map of systems $\model$, the following diagram commutes in $\RelPlus$:
    \begin{equation}
    \label{eqn:beliefModel}
        \begin{tikzcd}[ampersand replacement=\&]
            \sysM \& \sysX \\
            \sysM \& \sysX
            \arrow["{\indmodel_\sts}", "\bullet"{marking}, from=1-1, to=1-2]
            \arrow["{\update_\SysM}"', from=1-1, to=2-1]
            \arrow["{\diamondupdate{\SysX}}", "\bullet"{marking}, from=1-2, to=2-2]
            \arrow["{\indmodel_\sts}"', "\bullet"{marking}, from=2-1, to=2-2]
        \end{tikzcd}
    \end{equation}
    where
    \begin{align}
        \label{eqn:coarsedUpdates}
        \diamondupdate{\SysX} \coloneqq  \update_\SysX \comp \diamondsuit_{\model_\sts}.
    \end{align}
\end{proposition}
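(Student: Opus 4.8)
The plan is to evaluate both legs of \cref{eqn:beliefModel} as left-total relations $\sysM \markto \sysX$ and verify that they coincide. The only facts I need about $\model$ come from \cref{rmk:HWIMP}: since $\SysM$ and $\SysX$ are autonomous, ``$\SysM$ models $\SysX$'' means precisely that $\model_\sts : \sysX \to \sysM$ is a surjection and that it intertwines the two dynamics, i.e.\ the square \cref{eq:autonomous_model} commutes, $\update_\SysX \comp \model_\sts = \model_\sts \comp \update_\SysM$ as functions $\sysX \to \sysM$. I will use nothing else about the model.

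The cleanest route is equational. Reading off from \cref{eq:closure} that $\diamondsuit_{\model_\sts}$ sends a state to its $\model_\sts$-fibre --- equivalently $\diamondsuit_{\model_\sts} = \model_\sts \comp \indmodel_\sts$ in $\RelPlus$ --- the definition \cref{eqn:coarsedUpdates} expands to $\diamondupdate{\SysX} = \update_\SysX \comp \model_\sts \comp \indmodel_\sts$. The top-then-right leg of \cref{eqn:beliefModel} is therefore
\begin{equation*}
    \indmodel_\sts \comp \diamondupdate{\SysX} = \indmodel_\sts \comp (\update_\SysX \comp \model_\sts) \comp \indmodel_\sts = \indmodel_\sts \comp \model_\sts \comp \update_\SysM \comp \indmodel_\sts,
\end{equation*}
where the second equality is the commutation square. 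Now surjectivity of $\model_\sts$ gives $\indmodel_\sts \comp \model_\sts = \id_\sysM$ in $\RelPlus$ (each fibre is nonempty, so mapping $m$ to its fibre and then forward along $\model_\sts$ returns exactly $\{m\}$), collapsing the expression to $\update_\SysM \comp \indmodel_\sts$, which is the left-then-bottom leg. If an explicit check is wanted instead, I would fix $m \in \sysM$: the left-then-bottom leg yields $\{x \mid \model_\sts(x) = \update_\SysM(m)\}$, while the top-then-right leg yields $\bigcup_{x \in \indmodel_\sts(m)} \{x'' \mid \model_\sts(x'') = \model_\sts(\update_\SysX(x))\}$; the commutation square rewrites $\model_\sts(\update_\SysX(x)) = \update_\SysM(m)$ for every $x$ in the fibre, so all terms of the union agree and, the fibre being nonempty, the union is exactly that common set.

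The one step I would treat with care --- and the only place the statement could go wrong --- is the \emph{order} in \cref{eqn:coarsedUpdates}: the closure $\diamondsuit_{\model_\sts}$ is taken \emph{after} $\update_\SysX$. Had it been taken before, the left leg would reduce to $m \mapsto \update_\SysX\big(\indmodel_\sts(m)\big)$, the forward image of the fibre, which is only \emph{contained} in $\{x \mid \model_\sts(x) = \update_\SysM(m)\}$ and generally fails to exhaust it, because $\update_\SysX$ need not map the fibre over $m$ onto the whole fibre over $\update_\SysM(m)$. Closing after updating is exactly what restores surjectivity onto the target fibre and makes the square commute. Beyond the composition rule \cref{eq:relpluscomp} of $\RelPlus$, the proof consumes only the two hypotheses isolated at the start: surjectivity of $\model_\sts$ (for $\indmodel_\sts \comp \model_\sts = \id_\sysM$) and the commutation square \cref{eq:autonomous_model}.
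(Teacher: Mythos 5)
Your proof is correct and follows essentially the same route as the paper's: both arguments combine exactly the commutation square $\update_\SysX \comp \model_\sts = \model_\sts \comp \update_\SysM$, the definition $\diamondupdate{\SysX} = \update_\SysX \comp \model_\sts \comp \indmodel_\sts$, and the right-inverse identity $\indmodel_\sts \comp \model_\sts = \id_\sysM$ arising from surjectivity, composed in the same order. The only difference is presentational --- the paper carries out these identical steps in string-diagram form in $\RelPlus$, whereas you argue equationally (with an optional element-wise check) --- which is immaterial.
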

\begin{proof}
    See \cref{prf:IndexedModel}.
\end{proof}

The function $\diamondupdate{\SysX}$ can be thought of as \textit{the dynamics of $\SysX$ from the point of view of the map $\model: \SysX \to \SysM$} of a system $\SysM$ modelling a system $\SysX$.
We will see shortly that the map $\indmodel_\sts$ can be seen as an interpretation map and $\diamondupdate{\SysX}$ as the according model of a Bayesian filtering interpretation.
Before proceeding with the main theorem, we briefly note the following as relevant examples of \cref{th:IndexedModel}.

\begin{example}
    $\Contstaraut$ models $\System^*$ with the map $\modelSys$, according to \cref{th:impFullSystem}.
    According to \cref{th:IndexedModel} then, that the following diagram commutes:
    \begin{equation}
        \begin{tikzcd}
            {\contstaraut} & {\system^*}\\
            {\contstaraut} & {\system^*}
            \arrow["\indmodelsys", "\bullet"{marking}, from=1-1, to=1-2]
            \arrow["{\update_{\Contstaraut}}"', from=1-1, to=2-1]
            \arrow["{\diamondupdate{\system^*}}", "\bullet"{marking}, from=1-2, to=2-2]
            \arrow["\indmodelsys"', "\bullet"{marking}, from=2-1, to=2-2]
        \end{tikzcd}
    \end{equation}
\end{example}

\begin{example}
    $\Contstaraut$ models $\Env^*$ with the map $\modelEnv$, according to \cref{th:imp}.
    Thus \cref{th:IndexedModel} tells us that the following diagram commutes:
    \begin{equation}
        \begin{tikzcd}
            {\contstaraut} & {\env^*}\\
            {\contstaraut} & {\env^*}
            \arrow["\indmodelenv", "\bullet"{marking}, from=1-1, to=1-2]
            \arrow["{\update_{\Contstaraut}}"', from=1-1, to=2-1]
            \arrow["{\diamondupdate{\env^*}}", "\bullet"{marking}, from=1-2, to=2-2]
            \arrow["\indmodelenv"', "\bullet"{marking}, from=2-1, to=2-2]
        \end{tikzcd}
    \end{equation}
\end{example}

\subsection{Models imply Bayesian filtering interpretations}

Having obtained a possibilistic version of the IMP with a notion of beliefs given by the map $\indmodel_\sts$, we now show how a model from \cref{def:model} induces a Bayesian filtering interpretation.

\begin{theorem}
    \label{th:IMPmodelimpliesBayesmodel}
    Let $\SysM$ model $\SysX$ with $\model: \SysX \to \SysM$, and assume $\SysM$ and $\SysX$ are autonomous.
    Define $\reasoner: \sysX \otimes \sysM \to \sysM$ as
    \begin{equation}
        \label{eqn:IMPReasoner}
        \scalebox{0.8}{\tikzfig{figures/tikzit/IMPReasonerDeterministic}}
    \end{equation}
    and $\hiddenmodel: \sysX \markto \sysX \otimes \sysX$ as:
    \begin{equation}
        \label{eqn:IMPModel}
        \scalebox{0.8}{\tikzfig{figures/tikzit/IMPModelDeterministic}}
    \end{equation}
    Then $\hiddenmodel$ is the hidden Markov model, and $\indmodel_\sts: \sysM \markto \sysX$ the interpretation map of a Bayesian filtering interpretation of $\reasoner$, i.e. we have:
    \begin{equation}
        \scalebox{0.7}{\tikzfig{figures/tikzit/ConsistentPossibilisticUpdatesDeterministic}}
        \label{eqn:ConsistentPossibilisticUpdates}
    \end{equation}
    where the dashed lines show, informally, where we replaced the definitions above in \cref{eqn:conjugatePriorFiltering}.
\end{theorem}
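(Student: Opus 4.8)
The plan is to verify the consistency equation \eqref{eqn:conjugatePriorFiltering} directly in $\RelPlus$, evaluating both sides on an arbitrary parameter $\sysm \in \sysM$ and checking that the resulting inhabited subsets of $\sysX \otimes \sysX$ coincide. The three ingredients are read off as follows: the interpretation map $\belief = \indmodel_\sts$ sends $\sysm$ to its fibre $\indmodel_\sts(\sysm) = \{\sysx \in \sysX \mid \model_\sts(\sysx) = \sysm\}$; by \eqref{eqn:IMPReasoner} the deterministic reasoner is $\reasoner(\sysx, \sysm) = \update_\SysM(\sysm)$, which deletes the observation and advances the parameter; and by \eqref{eqn:IMPModel} the hidden Markov model $\hiddenmodel = \Copy_\sysX \comp (\diamondupdate{\SysX} \otimes \id_\sysX)$ copies the current state, advances one copy through the coarse dynamics $\diamondupdate{\SysX}$ of \eqref{eqn:coarsedUpdates}, and emits the untouched copy as the observation, so $\hiddenmodel(\sysx) = \{(\sysx'', \sysx) \mid \sysx'' \in \diamondupdate{\SysX}(\sysx)\}$.

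For the left-hand side I would compute $\belief \comp \hiddenmodel$ using the $\RelPlus$ composition \eqref{eq:relpluscomp}. The engine of the argument is \cref{th:IndexedModel}, which states $\indmodel_\sts \comp \diamondupdate{\SysX} = \update_\SysM \comp \indmodel_\sts$; equivalently, for any $\sysx$ in the fibre over $\sysm$ one has $\diamondupdate{\SysX}(\sysx) = \diamondsuit_{\model_\sts}(\update_\SysX(\sysx)) = \{\sysx'' \mid \model_\sts(\sysx'') = \update_\SysM(\sysm)\} = \indmodel_\sts(\update_\SysM(\sysm))$, where the middle equality uses \eqref{eq:closure} and the commuting square \eqref{eq:autonomous_model} of \cref{rmk:HWIMP}. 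Because this updated-state set is the \emph{same} for every $\sysx$ in the fibre, the left-hand side evaluated at $\sysm$ is the product $\indmodel_\sts(\update_\SysM(\sysm)) \times \indmodel_\sts(\sysm)$: the first factor being the common set of possible updated hidden states, the second the set of emitted observations.

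For the right-hand side of \eqref{eqn:conjugatePriorFiltering} I would separately compute the observation marginal and the reconstructed posterior. Since $\hiddenmodel$ emits the pre-update state verbatim, the marginal over observations obtained from the prior $\indmodel_\sts(\sysm)$ is again the fibre $\indmodel_\sts(\sysm)$; and since $\reasoner(\sysx, \sysm) = \update_\SysM(\sysm)$ discards the observation, the posterior branch $\indmodel_\sts(\reasoner(-, \sysm))$ is constantly $\indmodel_\sts(\update_\SysM(\sysm))$. Pairing these yields exactly the product $\indmodel_\sts(\update_\SysM(\sysm)) \times \indmodel_\sts(\sysm)$, so both sides agree and $(\belief, \hiddenmodel)$ is a Bayesian filtering interpretation of $\reasoner$ in the sense of \cref{def:interpretation}.

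I expect the main obstacle to be conceptual rather than computational: one must resist letting $\hiddenmodel$ carry the \emph{exact} dynamics $\update_\SysX$. With $\hiddenmodel(\sysx) = \{(\update_\SysX(\sysx), \sysx)\}$ the left-hand side would collapse to the graph $\{(\update_\SysX(\sysx), \sysx) \mid \sysx \in \indmodel_\sts(\sysm)\}$, a \emph{proper} subset of the product delivered by the right-hand side, and consistency would fail. Using the coarse-grained $\diamondupdate{\SysX}$ --- the ``approximation of the modelled system'' --- is precisely what restores the product form on the left and makes the two sides match. A secondary point to handle with care is that $\belief$ is non-deterministic, so $\Copy$ is not natural through it (copying a blurred state is not the same as blurring two independent copies); this is why I would argue elementwise in $\RelPlus$ via \eqref{eq:relpluscomp} rather than by sliding $\Copy$ through $\belief$ diagrammatically.
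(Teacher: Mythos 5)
Your proof is correct, but it takes a genuinely different route from the paper's. The paper argues entirely at the string-diagram level: post-composing the conclusion of \cref{th:IndexedModel} with $\model_\sts$ and using the right-inverse identity $\indmodel_\sts \comp \model_\sts = \id_\sysM$ (\cref{eqn:proofSurjective}), it deduces that the composite $\indmodel_\sts \comp \diamondupdate{\SysX} \comp \model_\sts$ equals $\update_\SysM$ and is therefore deterministic, and then invokes \emph{positivity} of $\RelPlus$ (a property of every Markov category with conditionals) to obtain exactly the factorisation that you compute by hand; a final appeal to surjectivity closes the argument. Your elementwise computation via \cref{eq:relpluscomp} is the concrete shadow of that abstract step: your observation that $\diamondupdate{\SysX}(\sysx) = \indmodel_\sts\big(\update_\SysM(\sysm)\big)$ is \emph{constant} across the fibre $\indmodel_\sts(\sysm)$, so that both sides of \cref{eqn:conjugatePriorFiltering} collapse to the same product $\indmodel_\sts\big(\update_\SysM(\sysm)\big) \times \indmodel_\sts(\sysm)$, is precisely the independence that positivity extracts diagrammatically from determinism of that composite. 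What your route buys is elementarity and transparency: it needs nothing beyond the definition of composition in $\RelPlus$, it makes vivid why the coarse dynamics $\diamondupdate{\SysX}$ rather than $\update_\SysX$ must appear (your counterexample, where the exact dynamics makes the left-hand side a graph strictly contained in the right-hand side's product, is a point the paper only makes informally in its discussion after the theorem), and you correctly flag that $\Copy$ cannot be slid through the non-deterministic $\belief$ --- the issue the paper circumvents via positivity. What the paper's route buys is portability: since it uses only positivity, the right-inverse property, and \cref{th:IndexedModel}, the same argument transfers to any positive Markov category in which those ingredients can be reproduced, whereas your calculation is tied to the concrete set-theoretic description of $\RelPlus$.
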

\begin{proof}
    See \cref{prf:beliefmap}.
\end{proof}

We highlight that the hidden Markov model $\hiddenmodel$ in \cref{th:IMPmodelimpliesBayesmodel} is possibilistic as opposed to probabilistic since we are now in $\RelPlus$ rather than $\FinStoch$ or another Markov category.
This means, once again, that $\hiddenmodel$ includes transitions to sets of possible states without assigning numerical probabilities, and is related to, among other, standard work on (non-deterministic) labelled transition systems in automata theory, as previously highlighted by, for instance,~\cite{virgo2021interpreting, fritz2024hidden}.

We then note that updates $\diamondupdate{\SysX}$ constitute a kind of approximation of
$\update_\SysX$.
For any state $x \in X$, the deterministic result of updates $\update_\SysX(x)$ is in fact replaced by an approximate, possibilistic one: the set $\diamondsuit_{\model_\sts}(\update_\SysX(x))$ of all states that are mapped to the same state $\model_\sts(\update_\SysX(x)) \in \sysM$ (i.e. to the fibre over $\model_\sts(\update_\SysX(x))$.)
In this sense, these states are indistinguishable from the perspective of $\SysM$.

This approximation is derived from the map on states $\model_\sts$ defined as a surjective function, see \cref{eq:closure}.
If the map on states was \emph{bijective}, then  $\diamondupdate{\SysX}=\update_\SysX$ and
beliefs would be (trivially) concentrated on a single hidden state.

Our result also shows a rather simplistic form of Bayesian filtering where observations are essentially ignored.
This is to be expected due to our definition of model in \cref{def:model}.
As mentioned in \cref{rmk:HWIMP}, systems that model autonomous systems must also be autonomous.
We thus only have an update function $\update_{\SysM}: \sysM \to \sysM$ that take no inputs to build a Bayesian filtering interpretation.

Nonetheless, even while ignoring observations, the Bayesian filtering interpretation is still consistent. This is possible since, due to the approximation of $\update_\SysX$, for any prior given by $\indmodel_\sts:M \markto X$, the observations do not affect the posterior.

There is another way in which Bayesian filtering interpretations induced by models of autonomous systems are special. Their beliefs are necessarily disjoint. This is a direct consequence of the interpretation map being given by a right inverse $\indmodel_\sts$ of the surjective map on states $\model_\sts$.

\subsection{Applications to the Internal Model Principle}
Recall that, in the IMP settings, the error feedback structure of \Cref{ass:autonomousController} let us define the autonomous attracting controller $\Contstaraut$, which is an autonomous system.
According to \Cref{th:impFullSystem} then, the autonomous attracting controller models the attracting full system $\System^*$.
\Cref{th:IMPmodelimpliesBayesmodel} then tells us that we then have the following Bayesian filtering interpretation.

\begin{example}
    Define $\reasoner:  \system^* \otimes \contstaraut \to \contstaraut$ as
    \begin{equation}
        \scalebox{0.8}{\tikzfig{figures/tikzit/IMPReasonerDeterministicControllerSys}}
    \end{equation}
    and $\hiddenmodel: \system^* \markto \system^* \otimes \system^*$ as:
    \begin{equation}
        \scalebox{0.8}{\tikzfig{figures/tikzit/IMPModelDeterministicFullSystem}}
    \end{equation}
    Then $\hiddenmodel$ is the hidden Markov model, and $\indmodelenv : \contstaraut \markto \system^*$ the interpretation map of a Bayesian filtering interpretation of $\reasoner$, i.e. we have:
    \begin{equation}
        \scalebox{0.7}{\tikzfig{figures/tikzit/ConsistentPossibilisticUpdatesFullSystemDeterministic}}
    \end{equation}
\end{example}

Similarly, \Cref{th:imp} shows that the autonomous attracting controller models the attracting environment $\Env^*$, and by using
\Cref{th:IMPmodelimpliesBayesmodel} we then also have the following Bayesian filtering interpretation.

\begin{example}
    Define $\reasoner:  \env^* \otimes \contstaraut \to \contstaraut$ as
    \begin{equation}
        \scalebox{0.8}{\tikzfig{figures/tikzit/IMPReasonerDeterministicControllerEnv}}
    \end{equation}
    and $\hiddenmodel: \env^* \markto \env^* \otimes \env^*$ as:
    \begin{equation}
        \label{eqn:attractingEnvironment}
        \scalebox{0.8}{\tikzfig{figures/tikzit/IMPModelDeterministicEnvironment}}
    \end{equation}
    Then $\hiddenmodel$ is the hidden Markov model, and $\indmodelenv : \contstaraut \markto \env^*$ the interpretation map of a Bayesian filtering interpretation of $\reasoner$, i.e. we have:
    \begin{equation}
        \scalebox{0.7}{\tikzfig{figures/tikzit/ConsistentPossibilisticUpdatesEnvironmentDeterministic}}
    \end{equation}
\end{example}

These examples show the precise sense in which one can understand a controller as modelling 1) the full system that contains it and 2) its environment from a Bayesian perspective: under the IMP assumptions~\cite{wonham1976towards, hepburn1981internal, hepburn1984structurally, hepburn1984error}, a model in the control theoretic sense described by \cref{def:model} admits a Bayesian filtering interpretation with reasoner and (hidden Markov) model, as described by \cref{def:interpretation} following work by~\cite{virgo2021interpreting, biehl2022interpreting}, given above.

The second example also provides a clearer understanding of the statement that the environment can be seen as a ``convenient fiction by which the designer may specify (or analyst describe) precisely the class of tracking and disturbance rejection tasks which the controlled system is to accomplish with zero (asymptotic) error''~\cite{wonham1976towards}, since the environment now just appears in \cref{eqn:attractingEnvironment} as a special case of a full-fledged \emph{epistemic} (see~\cite{virgo2023unifilar}) Bayesian model $\hiddenmodel$ for a reasoner, i.e. while it could in principle capture properties of the physical world where the controller is instantiated, it effectively only needs to obey the consistency equation expressed by a Bayesian filtering interpretation.

\section{Conclusions and future work}
\label{sec:conclusion}
The idea of ``internal models'' has appeared in a number of different research fields, including control theory, biology, artificial intelligence and cognitive science~\cite{conant1970every, francis1976internal, kawato1999internal, yi2000robust, russell2009artificial, seth2014cybernetic, ha2018world}.
These notions of internal models appeal, at least on the surface, to a common intuition of a system modelling another system in order to achieve a goal, e.g.~a reinforcement learning agent forming a model of its world to maximise the sum of expected rewards, or a cognitive system performing Bayesian inference on the hidden states generating its observations with a model of the environment to perform a certain task.
It is however unclear in the literature whether this goes beyond a simple analogy, and whether different notions of internal models can be captured by a common mathematical theory.

In this work we provided a first investigation on different notions of ``models'': one that is used in the control theoretic context of the internal model principle~\cite{francis1976internal, sontag2003adaptation, bin2022internal}, and one from work on Bayesian interpretations~\cite{virgo2021interpreting, biehl2022interpreting}.
We formally connected the two by showing that the notion of model for autonomous systems in the internal model principle as formulated in~\cite{hepburn1981internal, hepburn1984structurally, hepburn1984error} can be seen as a special case of the more general Bayesian filtering interpretations proposed  by~\cite{virgo2021interpreting, biehl2022interpreting}.

More specifically, \cref{th:IMPmodelimpliesBayesmodel} tells us that the definition of a model used in the IMP literature, i.e. the special case of our \cref{def:model} for autonomous systems~\cite{wonham1976towards, hepburn1981internal, hepburn1984structurally, hepburn1984error, wonham2019supervisory}, induces a Bayesian filtering interpretation.

Moreover, it also tells us that the IMP definition of a model is too restrictive from a Bayesian perspective: Bayesian filtering interpretations are a more general formalisation of what it means for a system to model another one.
The Bayesian filtering interpretations induced by the control-theoretic models are such that (1) the observations \emph{need not} be taken into account in order to update beliefs about the hidden states, and (2) those beliefs are always disjoint.

{\appendix
\section{Proofs}
\subsection{Proof of \cref{th:IndexedModel}}
\begin{proof}
\label{prf:IndexedModel}
    For this proof, we will use string diagrams as presented in \cref{sec:interpretationMap} to denote the arrows of the Markov category $\RelPlus$ since all the maps we need are morphisms in that category.
    First, we translate \cref{def:model} for autonomous systems (see \cref{rmk:HWIMP}) into string diagrams, obtaining:
    \begin{equation}
        \label{eqn:proofCommutative}
        \scalebox{0.8}{\tikzfig{figures/tikzit/proofInversemodel/model}}
    \end{equation}
    Then we note that in $\RelPlus$, we have that $\indmodel_\sts: \sysM \markto \sysX$ is a right inverse of the surjective map $\model_\sts: \sysX \rrightarrow \sysM$, i.e.:
    \begin{equation}
        \label{eqn:proofSurjective}
        \scalebox{0.8}{\tikzfig{figures/tikzit/proofInversemodel/RightInverse}}
    \end{equation}
    Furthermore, we also have the definition of the updates induced by the Hepburn--Wonham IMP, $\diamondupdate{\SysX}$, see \cref{eqn:coarsedUpdates}:
    \begin{equation}
        \label{eqn:proofCoarseUpdatesAppx}
        \scalebox{0.8}{\tikzfig{figures/tikzit/proofInversemodel/ForgetfulUpdate}}
    \end{equation}

    We then prove the following, starting from \cref{eqn:proofCommutative}, we can (i) post-compose both sides with $\indmodel_\sts$, (ii) apply \cref{eqn:proofCoarseUpdatesAppx} on the left hand side, (iii) pre-compose both sides with $\indmodel_\sts$, and then (iv) use the surjectivity of $\model_\sts$ (\cref{eqn:proofSurjective}) to get:
    \begin{equation}
        \label{eqn:proofBelief}
        \scalebox{0.55}{\tikzfig{figures/tikzit/proofInversemodel/Proof}}
    \end{equation}
    which gives us the string diagram version of \cref{eqn:beliefModel}.
\end{proof}

\subsection{Proof of \cref{th:IMPmodelimpliesBayesmodel}}
\begin{proof}
    \label{prf:beliefmap}
    For this proof, we will use string diagrams for the Markov category $\RelPlus$.
    We start with the following equation, derived from post-composing both sides of \cref{eqn:proofBelief} with $\model_\sts$ and subsequently using the surjectivity property of $\model_\sts$, see \cref{eqn:proofSurjective},
    \begin{equation}
        \label{eqn:proofDeterminisitic}
        \scalebox{0.8}{\tikzfig{figures/tikzit/proofBeliefMap/DeterministicUpdate}}
    \end{equation}
    This tells us that the composite $\indmodel_\sts \comp \diamondupdate{\SysX} \comp \model_\sts$ is deterministic, and thus allows us to apply the definition of \textit{positivity} for Markov categories~\cite[Definition 11.22]{fritz2020synthetic} (since every category with conditionals, thus including $\RelPlus$~\cite{fritz2023d}, is positive).
    We briefly recall the definition of a positive Markov category.
    \begin{definition}
        A Markov category $\Cat$ is positive if given two morphisms $f : A \to B$ and $g: B \to C$ such that their composite $f \comp g$ is deterministic implies the following
        \begin{equation}
            \label{eqn:proofPositive}
            \scalebox{0.8}{\tikzfig{figures/tikzit/proofBeliefMap/Positivity}}
        \end{equation}
    \end{definition}
    It then follows that, starting from \cref{eqn:proofCoarseUpdatesAppx}, we can (i) parallel compose both sides with $\id_\SysX$ and pre-compose both sides with $\indmodel_\sts \comp \Copy_\sysX$ (where $\Copy_\SysX$ is the copy map for the system $\SysX$) (ii) apply the definition of positivity to the deterministic composite in \cref{eqn:proofDeterminisitic} on the right hand side, and (iii) use the surjectivity of $\model_\sts$, see \cref{eqn:proofSurjective}, to obtain the following:
    \begin{equation}
        \scalebox{0.6}{\tikzfig{figures/tikzit/proofBeliefMap/ProofMartin}}
    \end{equation}
\end{proof}
}

\section*{Acknowledgments}
Manuel Baltieri was supported by JST, Moonshot R\&D, Grant Number JPMJMS2012.
Matteo Capucci is an Independent Researcher funded by the Advanced Research+Innovation Agency (ARIA).
Work by Martin Biehl on this paper was made possible through the support of Grants 62828 and 62229 from the John Templeton Foundation.
Nathaniel Virgo's work on this paper was made possible through the support of grant 62229 from the John Templeton Foundation.
The opinions expressed in this publication are those of the authors and do not necessarily reflect the views of the John Templeton Foundation.

\bibliographystyle{IEEEtran}
\bibliography{AllEntries}

\vfill

\end{document}